\newtheorem{Theorem}{Theorem}[part]
\newtheorem{Definition}{Definition}[part]
\newtheorem{Proposition}{Proposition}[part]
\newtheorem{Lemma}{Lemma}[part]
\newtheorem{Remark}{Remark}[part]
\def \R{\mathbb{R}}
\def \E{\mathbb{E}}
\def \P{\mathbb{P}}
\def \Ac{{\cal A}}
\def \Bc{{\cal B}}
\def \Pc{{\cal P}}
 \def \Nc{{\cal N}}
\def \Wc{{\cal W}}
\def \1{{\mathds 1}}
\def \eps{\varepsilon}
\def\beqs{\begin{eqnarray*}}
\def\enqs{\end{eqnarray*}}
\def\beq{\begin{eqnarray}}
\def\enq{\end{eqnarray}}
 \title{Discrete-Time Mean-Field Stochastic Control with Partial Observations}
\author[,1,2]{Jeremy Chichportich\thanks{Email: \texttt{jeremy@bramham-gardens.com}}}
\author[,1]{Idris Kharroubi\thanks{Corresponding author. Email: \texttt{idris.kharroubi@sorbonne-universite.fr}. The  author gratefully acknowledge financial support from the Agence Nationale de la Recherche (ReLISCoP grant ANR-21-CE40-0001). }}
 \affil[1]{LPSM, UMR CNRS 8001, Sorbonne University and Universit\'e de Paris}
\affil[2]{Bramham gardens}
             \date{\today}
\begin{document}

 \maketitle

 \begin{abstract} We study the optimal control of discrete time mean filed dynamical systems under partial observations. We express the global law of the filtered process as a controlled system with its own dynamics. Following a dynamic programming approach, we prove a verification result providing a solution to the optimal control of the filtered system.  As an application, we study a general linear quadratic example for which an explicit solution is given. We also describe an algorithm for the numerical approximation of the optimal value and provide numerical experiments on a financial example.
\end{abstract}

\noindent \textbf{MSC Classification: }60K35, 	60G35, 49L20.

\noindent \textbf{Keywords: }Optimal control, mean-field interaction, partial observation, dynamic programming.  
\section{Introduction}
We consider an optimal control problem for a system with mean field discrete time dynamics under partial observations. The question of how to optimally control a system with mean field dependence in the dynamics is related to the modelization of the optimal behavior for populations with large number of interacting individuals.  

The case where each individual chooses an own action in order to optimise a non-cooperative reward, has led to the theory of mean-field games (MFGs), introduced in \cite{HCM06} and \cite{LL07}. The Nash equilibrium is then described by two equations, the first one corresponding to the optimal behavior of a representative agent and the second one corresponding to the evolution of the whole population under the optimal choice of each individual. 

In the case where all the individuals follow the same goal, the cooperative behavior leads to the optimal control of mean field dynamical systems. A growing literature has emerged on the subject for the continuous time case with two main approaches. The first one follows a maximum principle method to provide necessary and sufficient conditions for optimality, see e.g. \cite{AD10, MOX10, BDL11, CD15,JLY15}. The second approach consists  in proving a dynamic programming principle to solve the problem, see e.g. \cite{AX01, BFY13, PW18}.  

Such models with law dependency of the dynamics appear in mathematical finance. The calibration of local volatility models to market smiles leads indeed to mean field stochastic differential equations. We refer to \cite[Chapter 11]{guyon2013nonlinear} for more details.

\vspace{2mm}

Concerning the discrete-time case, \cite{ELY13} studied the case of a linear-quadric problem and turn it into a quadratic optimization problem in an Hilbert space, allowing to get necessary and sufficient conditions for solving the problem. A dynamic programming approach is use in \cite{PW16} to the case where controls are restricted to feedback ones.  By considering the law of the controlled process as a state variable, it allows to get a verification theorem and to solve explicitly the linear quadratic case.

\vspace{2mm}

In this paper, we investigate the optimal control problem of mean-field discrete time systems under partial information. 
This question has already been studied for systems without mean-field interaction. We refer to the book \cite{EAM95} for a detailed presentation of the estimation and the control of systems under partial information. The common approach to deal with optimal control of partially observable systems consists in computing the conditional law of the unobserved component given the observation and to derive its dynamic to retrieve a completely observable controlled system called filtered controlled system.   

\vspace{2mm}

In the case where the unobservable component admits a mean-field dependence, this approach leads to an intractable problem since the filtered system involves two different marginal laws, the original law of the unobservable component and the filtered law given the observations. 



%

\vspace{2mm}

To overcome this issue, we consider the law of the global system composed by both observed and unobserved components as a state variable. By keeping in mind the past, it also allows to consider general controls that might not be in the feedback form.
We then rewrite the initial problem as a new control problem with respect to the global law of the system starting from the initial time and with controls as functions depending only on the observed variables. We derive a dynamic programming principle and a verification theorem for this new formulation.  A second verification theorem is also provided for feedback controls. As an application of the verification theorem, we provide the explicit solution of a linear-quadratic optimal control problem with partial observations. 
As it is not possible to get explicit solutions  in many cases, we describe an algorithm for the numerical approximation of the optimal value of our problem. We then provide numerical experiments for this algorithm. 
We first test our algorithm an a benchmark given by the explicit solution of the linear-quadratic model. We then apply our algorithm to a model inspired by the  optimal investment problems for private equities in finance and compare the results with standard strategies.

\vspace{2mm}

The remainder of the paper is organized as follows. In Section \ref{Sec2}, we present the control problem and the computation of the filter. In Section  \ref{Sec3}, we extend the original filtered problem to get a tractable problem. We then provide a dynamic programing principle for the extended problem and a verification theorem. We also present a feedback version of this verification theorem. Finally, we present in Section \ref{Sec4} some applications. We first study a linear quadratic model for which we give an explicit solution via the verification Theorem.  An algorithm is then provided to approximate the optimal valuein the general case. We test this algorithm on the linear-quadratic problem and run an experiment on a financial model inspired by private equity investments.  

\section{The control problem under partial observation}\label{Sec2}
\subsection{The model}
We fix a probability space $(\Omega,\Ac,\P)$ and three Banach spaces $E$, $F$ and $C$. We denote by $|.|$ the norm on those spaces and by $\Bc(E)$, $\Bc(F)$ and $\Bc(C)$ their respective Borel $\sigma$-algebrae. We also denote by $\Pc_2(E)$ the set of probability measure $\mu$ on $(E,\Bc(E))$ such that $\int_E|x|^2d\mu(x)<+\infty$. We similarly define $\Pc_2(E\times E)$ and $\Pc_2(E\times F)$. We endow the set $\Pc_2(E)$ with the $2$-Wasserstein distance $\Wc_2$ defined by
\beqs
\Wc_2(\mu,\mu') & = &\Big( \inf\big\{\int_{E^2}|x-y|^2\pi(dx,dy),\; \pi\in\Pc_2(E^2)~:~\pi(.\times E)=\mu \;,~\pi(E\times .)=\mu'\big\}\Big)^{\frac{1}{2}}
\enqs
and denote by $\Bc(\Pc_2(E))$ its related Borel $\sigma$-algebra. We also similarly define  $\Wc_2$ on $E\times F$ and $\Bc(\Pc_2(E\times F))$. We fix a terminal time $T$ and we define the partially observed control system.

\paragraph{Controls.} A control is a sequence $\alpha:=(\alpha_n)_{0\leq n\leq T-1}$ of random variables defined on  $(\Omega,\Ac,\P)$ and valued in $C$ such that
\beq\label{cond int strat}
\E\big[|\alpha_n|^2\big] & < & +\infty
\enq
for all $n=0,\ldots,T-1$.   We denote by $\mathfrak{C}$ the set of such controls.  

\paragraph{Hidden system.} For a given control $\alpha\in\mathfrak{C}$, we consider a controlled process $(X_k)_{0\leq k\leq T}$ defined by its initial condition  $X_0^\alpha=\xi$ where $\xi\in L^2(\Omega,\Ac,\P;E)$ 
  and the dynamics
\beqs
X_{n+1}^\alpha & = & G_{n+1}(X_{n}^\alpha,\P_{X_{n}^\alpha}, \alpha_{n}, \eps_{n+1})\;,~0\leq n \leq T-1
\enqs
for some measurable functions $G_1,\ldots,G_T$ from $E\times \Pc_2(E)\times C\times \R^d$ to $E$, where $(\eps_n)_{1\leq n\leq T}$ is a sequence of square integrable i.i.d. random variables valued in $\R^d$ and independent of  $\xi$. We make the following assumption on the functions $G_1,\ldots,G_{T}$.

\vspace{2mm}

\noindent \textbf{(H1)} There exist a constant $C$ such that
\beqs
G_{n}(x,\mu, a, e) & \leq & C\big(1+|x|^2+\Wc_2(\mu,\delta_0)^2+|a|^2+|e|^2\big)\;, 
\enqs
for all$(x,a,\mu,e)\in E\times C\times \Pc_2(E)\times \R^d$ and  $n=1,\ldots,T$. 

\vspace{2mm}

\paragraph{Observed System.} For a given control $\alpha\in \mathfrak{C}$, the observation is given by a process $(Y^\alpha_n)_{0\leq n\leq T}$ defined  by its initial condition $Y_0^\alpha = \zeta$ where $\zeta\in L^2(\Omega,\Ac,\P;F)$ is independent of 
 $(\eps_n)_{{1\leq n\leq T}}$  and the dynamics
\beqs
Y^\alpha_{n+1} & = & H_{n+1}(X^\alpha_{n+1},Y^\alpha_{n}, \alpha_n, \eta_{n+1})\;,~0\leq n\leq T-1
\enqs
for some measurable functions $H_1,\ldots,H_T$ from $  E\times F\times C\times \R^{d'}$ into $F$, where $(\eta_n)_{1\leq n\leq T}$ is a sequence of i.i.d. random variables valued in $\R^{d'}$, independent of  $\xi$, $\zeta$ and $(\eps_n)_{1\leq n\leq T}$. 
We make the following assumption on the functions $H_1,\ldots,H_{T}$.

\vspace{2mm}

\noindent \textbf{(H2)} There exist a constant $C$ such that
\beqs
H_{n}(x,y, e) & \leq & C\big(1+|x|^2+|y|^2+|e|^2\big)\;, 
\enqs
for all $(x,y,e)\in E\times F\times  \R^{d'}$ and  $n=1,\ldots,T$. 

\vspace{2mm}

Under Assumptions \textbf{(H1)} and \textbf{(H2)}, we get that the controlled process $(X^\alpha_n,Y^\alpha_n)_{0\leq n\leq T}$ is square integrable
\beqs
\E\big[|X^\alpha_n|^2]+\E\big[|Y^\alpha_n|^2] & < & +\infty
\enqs
for any control $\alpha\in \tilde{\mathfrak{C}}$ and any $n=0,\ldots,T$.

 \paragraph{Optimization problem.} We fix $T$ measurable functions $c_0,\ldots,c_{T-1}$ from $E\times \Pc_2(E)\times C$  to $\R$ and a function $\gamma$ from $E\times \Pc_2(E)$ to $\R$.  on which we make the following assumption.

\vspace{2mm}

\noindent \textbf{(H3)} There exist a constant $C$ such that
\beqs
c_{n}(x,\mu, a) & \leq & C\big(1+|x|^2+\Wc_2^2(\mu,\delta_0)+|a|^2\big)\;,\quad n=0,\ldots,T-1\;,\\
\gamma(x,\mu) & \leq & C\big(1+|x|^2+\Wc_2^2(\mu,\delta_0)  \big)
\enqs
for all $(x,\mu,a)\in E\times \Pc_2(E)\times  C$. 
 
\vspace{2mm}

 \noindent We next introduce the cost function $J$ by 
\beqs
J(\alpha) & := & \E\Big[ \sum_{k=0}^{T-1}c_k\big(X_k^\alpha,\P_{X_k^\alpha},\alpha_k\big)+\gamma\big(X^\alpha_T,\P_{X^\alpha_T}\big) \Big]
\enqs
for  a strategy $\alpha\in\mathfrak{C}$. We notice that under \textbf{(H1)}, \textbf{(H2)} and \textbf{(H3)}  $J(\alpha)$ is well defined.
We now define the subset $\bar{\mathfrak{C}}$ of $\mathfrak{C}$ by
\beqs
\bar{\mathfrak{C}} & = & \Big\{\alpha=(\alpha_k)_{0\leq k\leq T-1}\in {\mathfrak{C}} \text{ adapted to the filtration generated by }  Y^\alpha
\Big\}
\enqs
The problem is to compute
\beq\label{defV0}
V_0 & = & \inf_{\alpha\in \bar{\mathfrak{C}}}J(\alpha)\;.
\enq

 In the sequel we use the following notation : for $N\geq 0$ and $(x_0,\ldots,x_N)$ a given vector, we write $x_{n:m}$ for $(x_n,\ldots,x_m)$ where $0\leq n\leq m\leq N$.

We notice that for $\alpha \in  \bar{\mathfrak{C}}$, there exists measurable functions $a_k:~F^k\rightarrow A$, $k=0,\ldots,T-1$ such that
\beqs
\alpha_k & = & a(Y_{0:k}^\alpha)\;,\quad k=0,\ldots,T-1\;.
\enqs 
We therefore identify in the sequel, the set $\bar{\mathfrak{C}}$ to the set $\tilde{\mathfrak{C}}$ defined by 
\beqs
\tilde{\mathfrak{C}} & := & \big\{a=(a_k)_{0\leq k\leq T-1} \text{ such that } a_k:~F^k\rightarrow A \text{ measurable}   \\
 &  &  \qquad\qquad  \text{ and } \E[|a(Y^a_{0:k})|^2]<+\infty
\text{ for } k=0,\ldots,T-1 \big\}\;.
\enqs
with $X^a=X^\alpha$ and $Y^a=Y^\alpha$ for $\alpha_k=a_k(Y_{0:k}^a)$, $k=0,\ldots,T-1$. Let us stress the well posedness of the controlled processes $X^a$ and $Y^a$ as  the components $X^a_k$ and $Y^a_k$ depend only on $\alpha_0,\ldots,\alpha_{k-1}$ for $k=1,\ldots,T$.

More precisely, we have the following identity
\beqs
V_0 & = & \sup_{a\in \tilde{\mathfrak{C}}} \tilde J(a)
\enqs
where
\beqs
\tilde J(a) & = & \E\Big[ \sum_{k=0}^{T-1}c_k\big(X_k^a,\P_{X_k^a},a_k(Y_{0:k}^a)\big)+\gamma\big(X^a_T,\P_{X^a_T}\big) \Big]~=~J(\alpha)\;.
\enqs
We next define the sets $\tilde{\mathfrak{C}}_0,\ldots,\tilde{\mathfrak{C}}_{T-1}$ by
\beqs
\tilde{\mathfrak{C}}_k(\mu) & = & \big\{ a_k:~F^{k+1}\rightarrow C \text{ measurable such that } \int_{F^{k+1}}|a_k(y_{0:k})|^2d\mu(x_{0:k},y_{0:k})~<~+\infty\big\} \;,
\enqs 
 for $\mu\in\Pc_2((E\times F)^{k+1})$ and  $k=0,\ldots,T-1$. In particular, we have
\beqs
\tilde{\mathfrak{C}} & = & 
\Big\{ a=(a_k)_{0\leq k\leq T-1} \text{ such that } 
 a_k\in \tilde{\mathfrak{C}}_k(\P_{X^a_{0:k},Y^a_{0:k}})\mbox{ for } k=0,\ldots,T-1\Big\}\;.
\enqs
We take in the rest of the paper $E=\R^p$ and $F=\R^{p'}$ and $C=\R^{p''}$ for some integers $p,p',p''\geq 1$. 
\subsection{Filtered system}
To compute the value $V_0$ defined by \eqref{defV0}, we would like to compute the conditional law of $X_k$ given $Y_{0:k}$ for $k= 1,\ldots,T$. For that, we make the following assumption.

\vspace{2mm}

\noindent \textbf{(H4)} For  $(x, y, a)\in E\times F\times{C} $ and  $k= 1,\ldots,T$, the random variable  $H_k(x, y, a, \eta_k)$ admits a density
\beqs
e & \mapsto & h_k(x,y, a, e)
\enqs
where the function $h_k$ is a $\Bc(E)\otimes\Bc(F)\otimes\Bc(C)\otimes \Bc(\R^{d'})$-measurable. 
\vspace{2mm}

Denote by $P$ the controlled transition probability of the process $X$. It is given by
\beqs
P_k^u(x_{k-1},\mu,dx_k) & = & \P(G_{k}(x_{k-1},\mu,u,\eps_{k})\in dx_{k})
\enqs
for $k\geq 1$, $x_{k-1}\in\R^d$ and $\mu\in \Pc_2(\R^d)$. 

For a given control $a\in \tilde{\mathfrak{C}}$, we get from assumption \textbf{(H4)} that the pair $(X^a_k,Y^a_k)_{0\leq k\leq T-1}$ admits the following transition 
\beq\nonumber
\P\big((X^a_k,Y^a_k)\in dx_kdy_k|X^a_{0:k-1},Y^a_{0:k-1}\big) & = & \\
h_k(x_k,Y^a_{k-1}, a_{k-1}(Y^a_{0:k-1}), y_k)P_k^{a_{k-1}(Y^a_{0:k-1})}(X^a_{k-1},\P_{X^a_{k-1}},dx_k)dy_k\label{decom-KS}  &  & 
\enq
for $k=1,\ldots,T$. 
Therefore, the joint law $\P_{(X^a_{0:k},Y^a_{0:k})}$ of $(X^a_{0:k},Y^a_{0:k})$ is given by 
\beqs
\P_{(X^a_{0:k},Y^a_{0:k})}(dx_{0:k},dy_{0:k}) & = & \P_{X_0,Y_0}(dx_0,dy_0)  \prod_{\ell=1}^k h_\ell(x_\ell, y_{\ell - 1}, a_{\ell-1}, y_{\ell}) P_\ell^{a_{\ell-1}(y_{0:\ell-1})}(x_{\ell-1},\mu_{\ell - 1}^a,dx_\ell)dy_\ell\;.
\enqs 
where $\mu^a_\ell=\P_{X_\ell^a}$ for $\ell=1,\ldots,k$.

We next introduce the measures $\Pi^a_k$, $k=0,\ldots,T$ defined by
\beqs
\Pi_{k,y_{0:k}}^a
(dx_k) & = & \P(X^a_k\in d x_k|Y^a_{0:k}=y_{0:k}) \;.
\enqs
From \eqref{decom-KS} we get the Kallianpur Streibel formula
\beqs
\Pi_{k,y_{0:k}}^a
(dx_k) & = & \frac{\int_{x_{0:k-1}}\prod_{\ell=1}^k h_\ell(x_\ell, y_{\ell - 1}, a_{\ell-1}, y_{\ell}) P_\ell^{a_{\ell-1}(y_{0:\ell-1})}(x_{\ell-1},\mu_{\ell - 1}^a,dx_\ell)\P_{X_0}(dx_0|Y_0=y_0)}{\int_{x'_{0:k}}\prod_{\ell=1}^k h_\ell(x'_\ell, y_{\ell - 1}, a_{\ell-1}, y_{\ell}) P_\ell^{a_{\ell-1}(y_{0:\ell-1})}(x'_{\ell-1},\mu_{\ell-1}^a,dx_\ell)\P_{X_0}(dx'_0|Y_0=y_0)}\;.
\enqs
We also define the unnormalized measures $\pi_{k,y_{0:k}}^a$ by
\beqs
\pi_{k,y_{0:k}}^a(dx_k) & = & \int_{x_{0:k-1}}\prod_{\ell=1}^k h_\ell(x_\ell, y_{\ell - 1}, a_{\ell-1}, y_{\ell}) P_\ell^{a_{\ell-1}(y_{0:\ell-1})}(x_{\ell-1},\mu_{\ell - 1}^a,dx_\ell)\P_{X_0}(dx_0|Y_0=y_0)\;.
\enqs
Still using \eqref{decom-KS} we get
\beqs
\Pi_{k,y_{0:k}}^a
(dx_k) & = & \frac{\pi_{k,y_{0:k}}^a
(dx_k)}{\pi_{k,y_{0:k}}^a
(\R^d)}\;.
\enqs
We now compute the probability measures $\mu_k^a$. From  \eqref{decom-KS} we have
\beqs
\mu_k^a(dx_k) & = & \int_{x_{0:k-1}}\int_{y_{0:k-1}}
P_k^{a_{k-1}(y_{0:k-1})}(x_{k-1},\mu_{k-1}^a,dx_k)h_k(x_k, y_{k - 1}, a_{k-1}, y_{k})\\
 & &  \prod_{\ell=1}^{k-1} h_\ell(x_\ell, y_{\ell - 1}, a_{\ell-1}, y_{\ell}) P_\ell^{a_{\ell-1}(y_{0:\ell-1})}(x_{\ell-1},\mu_{\ell - 1}^a,dx_\ell)dy_{1:k-1} \P_{X_0,Y_0}(dx_0,dy_0)\;.
\enqs
\subsection{The filtered problem}
We now turn to the computation of the value $V_0$. By definition we have
\beqs
V_0 & = & \inf_{a\in \tilde{\mathfrak{C}}}J(a)\\
 & =  & \inf_{a\in  \tilde{\mathfrak{C}}} \E\Big[ \sum_{k=0}^{T-1}\E\Big[ c_k\big(X_k^a,\P_{X_k^a},a_k(Y^a_{0:k})\big)|Y^a_{0:k}\Big]+\E\Big[\gamma\big(X^a_T,\P_{X^a_T}\big)|Y^a_{0:T}\Big] \Big]\;.\enqs
 Using the previous notations, we get
\beqs
V_0  & = & \inf_{a\in  \tilde{\mathfrak{C}}}\E\Big[ \sum_{k=0}^{T-1} \int c_k(x_k,\mu_k^a,a_k(Y^a_{0:k}))\Pi^a_{k,Y^a_{0:k}}(dx_k)+\int \gamma(x_T,\mu_T^a)\Pi^a_{T,Y^a_{0:T}}(dx_T)\big]\;.
\enqs
Unfortunately, this form is not time consistent. Indeed, the costs involve the marginal laws of the unobservable controlled process $X^a$ but also the controlled conditional laws $\Pi^a_{k,Y^a_{0:k}}$.  
%
\section{Extended filtered control problem}\label{Sec3}
\subsection{Extended problem}

Under the previous form the problem is not tractable as it involves the laws $\mu^a_n$ and $\Pi^a_{n,Y_{0:n}}$. {Indeed, we cannot get $\mu^a_n$ from $\Pi^a_{n,Y_{0:n}}$ and conversely}. This prevents from an application of a dynamic programming approach. To overcome this issue, we introduce the controlled measures $M_n^a\in\Pc_2\big((E\times F)^{n+1}\big)$ defined by
\beqs
M_n^a(dx_{0:n},dy_{0:n}) & = & \prod_{\ell=1}^nh_\ell(x_\ell, y_{\ell - 1}, \alpha_{\ell-1},y_\ell)P_\ell^{a_{\ell-1}(y_{0:\ell-1})}(x_{\ell-1},\mu_{\ell-1}^a,dx_\ell)dy_{1:n}\P_{X_0,Y_0}(dx_0,dy_0)
\enqs
for $n=0,\ldots,T$.
We observe that $\mu^a_n$ and $\Pi^a_{n,Y_{0:n}}$ can be computed from the measure $M_n^a$. Indeed, we first have
\beqs
\mu_n^a(dx_{0:n}) & = & \int_{y_{0:n}} M_n^a(dx_{0:n},dy_{0:n})\;.
\enqs
Secondly, we have
\beqs
\pi^a_{n,y_{0:n}}(dx_n) & = & \int_{x_{0:n-1}} {\frac{dM^a_n}{ dy_{0:n}}}(y_{0:n},dx_{0:n})\;.
\enqs
Therefore, we get
\beqs
\Pi^a_{n,y_{0:n}}(dx_n) & = & \frac{\int_{x_{0:n-1}} \frac{dM^a_n}{ dy_{0:n}}(y_{0:n},dx_{0:n})
}{\int_{x_{0:n}} \frac{dM^a_n}{dy_{0:n}}(y_{0:n},dx_{0:n})
}\;.
\enqs
We now introduce some notations. For $n\in\{0,\ldots,T\}$ and $M\in \Pc_2((E\times F)^{n+1})$ we denote by ${}_\ell M$ the $\ell$-th marginal of $M$:
\beqs
{}_\ell M(dx_\ell,dy_\ell) & = & \int_{x_0,y_0}\cdots \int_{x_{\ell-1},y_{\ell-1}}\int_{x_{\ell+1},y_{\ell+1}}\cdots \int_{x_n,y_n}M(dx_{0:n},dy_{0:n})
\enqs
 and ${}^1_\ell M$ and ${}^2_\ell M$ the first and second marginals of ${}_\ell M$ respectively:
 \beqs
{}^1_\ell M(dx_\ell) & = & \int_{y_\ell}{}_\ell M(dx_\ell,dy_\ell)\\
{}^2_\ell M(dy_\ell) & = & \int_{x_\ell}{}_\ell M(dx_\ell,dy_\ell)
 \enqs 
 for $\ell\in\{0,\ldots,n\}$. 
We define the controlled transition probability $\bar P_{n}:(E\times F )^{n}\times \tilde{\mathfrak{C}} \times \Pc_2((E\times F)^{n})\rightarrow\Pc_2(E\times F)$ by
\beqs
\bar P^{a_{n-1}}_{n} (z,M,dw) & = & P^{a_{n-1}(y_{0:n-1})}_{n}(x_{n-1},{}_{n-1}^1M,dw_{1})h_n(w_1,y_{n-1}, a_{n-1}(y_{0:n-1}), w_2)dw_2
\enqs 
for $z=(x_{\ell},y_{\ell})_{0\leq\ell\leq n-1}\in (E\times F)^{n}$, $M\in \Pc_2\big((E\times F)^{n})$, $a_{n-1}\in \tilde{\mathfrak{C}}_{n-1}(M)$ and $w=(w_1,w_2)\in E\times F$ and  $n=1,\ldots,T$. 
The controlled measures $\P_{X_n^a,Y_n^a}$ have the following dynamics
\beqs
\P_{(X_{0:n+1}^a,Y_{0:n+1}^a)}(dx_{0:n +1},dy_{0:n +1}) & = & \\
 \bar P^{a_n}_{n+1}(x_{0:n},y_{0:n}, a_n(Y_{0:n}), \P_{(X_{0:n}^a,Y_{0:n}^a)},dx_{n+1},dy_{n+1})\P_{(X_{0:n}^a,Y_{0:n}^a)}(dx_{0:n},dy_{0:n}) & & 
\enqs
for $n=0,\ldots,T-1$.
In the sequel, we use the following notation : for $\mu\in \Pc_2((E\times F)^{n+1})$ and $a_n\in \tilde{\mathfrak{C}}_n(\mu)$, we define  $\bar P^{a_n}_{n+1}\mu \in \Pc_2((E\times F)^{n+2})$ by
\beqs
 \bar P^{a_n}_{n+1}\mu (dx_{0:n+1},dy_{0:n+1}) & = & \bar P^{a_n}_{n+1}(x_{0:n},y_{0:n}, \mu,dx_{n+1},dy_{n+1})\mu(dx_{0:n},dy_{0:n})
\enqs
for $n=0,\ldots,T-1$. The dynamics of the controlled measures $\P_{(X_{0:n}^a,Y_{0:n}^a)}$ can be rewritten under the following simplified form
\beqs
\P_{(X_{0:n+1}^a,Y_{0:n+1}^a)}(dx_{0:n+1},dy_{0:n+1}) & = & \bar P^{a_n}_{n+1}\P_{(X_{0:n}^a,Y_{0:n}^a)}(dx_{0:n},dy_{0:n})
\enqs
for $n=0,\ldots,T-1$.

We now turn to the filtered problem. We define the cost functions $C_n$, $n=0,\ldots,T-1$ and $\Gamma$ by
\beqs
C_n(M,a_n) & = & \int c_n\Big(x_n,{}^1_nM,a_n(y_{0:n})\Big)dM(x_{0:n},y_{0:n})
\enqs
for $M\in\Pc_2\big((E\times F)^{n+1})\big)$, $a_n\in\tilde{\mathfrak{C}}_n(M)$
and 
\beqs
\Gamma (M) &  = & \int \gamma\Big(x_T,{}^1_TM\Big)dM(x_{0:T},y_{0:T})\;,
\enqs
for  $M\in\Pc_2\big((E\times F)^{T+1}\big)$.
A straightforward computation gives 
\beqs
V_0 & = & \inf_{a\in \tilde{\mathfrak{C}}}\bar J(a)\;.
\enqs
where the criteria $\bar J$ is defined by
\beq\label{defJbar}
\bar J(a) & := &  \sum_{n=0}^{T-1}C_k\big(\P_{(X_{0:n}^a,Y_{0:n}^a)},a_n\big)+\Gamma\big(\P_{(X_{0:T}^a,Y_{0:T}^a)}\big)\;,\quad a\in\tilde{\mathfrak{C}} \;.
\enq

\subsection{Dynamic programming}
We dynamically extend the value $ V_0$. For that, we define for $n=0,\ldots,T$ the functions $V_n: \Pc_2((E\times F)^{n+1})\rightarrow \R$ by
\beqs
V_n(\mu) & = & \inf_{a\in \;{}_n\tilde{\mathfrak{C}}(\mu)} \bar J_n(\mu,a)
\enqs 
 where
\beqs
\bar J_n(\mu,a)  & := &  \sum_{k=n}^{T-1}C_k\big(M^{n,\mu,a}_k,a_k\big)+\Gamma\big(M^{n,\mu,a}_T\big) 
\enqs
with $(M^{n,\mu,a}_k)_{k=n,\ldots,T}$  defined by $M^{n,\mu,a}_n=\mu$ and 
\beq\label{dynM}
M^{n,\mu,a}_{k+1} & = & \bar P^{a}_{k+1}M^{n,\mu,a}_{k}  
\enq
for $k=n,\ldots,T-1$, and
\beqs
{}_n\tilde{\mathfrak{C}}(\mu) & = & \Big\{ a=(a_k)_{0\leq k\leq T-1} \text{ such that } 
 a_k\in \tilde{\mathfrak{C}}_k(M^{n,\mu,a}_{k})\mbox{ for } k=n,\ldots,T-1\Big\}\
\enqs
for any $\mu\in \Pc_2((E\times F)^{n+1})$

\begin{Lemma}[Dynamic programming]\label{dynprog}
The value functions $V_0,\ldots,V_T$ satisfy the following dynmic programming principle
\beq\label{Dyn-prog}
V_n(\mu) & = & \inf_{a_n\in\tilde{\mathfrak{C}}_n(\mu)} \big\{C_n\big(\mu,a_n\big)+V_{n+1}\big( \bar P^{a_n}_{n+1}\mu\big)\big\}\;,\quad \mu\in \Pc_2((E\times F)^{n+1})\;,\enq
for $n=0,\ldots,T-1$
\end{Lemma}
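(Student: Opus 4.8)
The plan is to treat this as a deterministic control problem on the space of measures: once $\mu$ and a control $a$ are fixed, the whole trajectory $(M^{n,\mu,a}_k)_{k\geq n}$ is determined by the recursion \eqref{dynM}, and both the running costs $C_k(M^{n,\mu,a}_k,a_k)$ and the terminal cost $\Gamma(M^{n,\mu,a}_T)$ depend only on this deterministic flow. I would therefore prove separately the two inequalities between $V_n(\mu)$ and the right-hand side
\[
W_n(\mu) \;:=\; \inf_{a_n\in\tilde{\mathfrak{C}}_n(\mu)}\big\{C_n(\mu,a_n)+V_{n+1}(\bar P^{a_n}_{n+1}\mu)\big\}\;,
\]
the whole argument resting on a single structural fact, namely a flow (semigroup) consistency for the map \eqref{dynM}.

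First I would record this consistency property: for $a\in{}_n\tilde{\mathfrak{C}}(\mu)$, setting $\mu'=\bar P^{a_n}_{n+1}\mu=M^{n,\mu,a}_{n+1}$, the recursion \eqref{dynM} gives $M^{n,\mu,a}_k=M^{n+1,\mu',a}_k$ for every $k=n+1,\ldots,T$, simply because both sequences take the value $\mu'$ at time $n+1$ and obey the same one-step map. This yields at once the additive decomposition
\[
\bar J_n(\mu,a)\;=\;C_n(\mu,a_n)+\bar J_{n+1}(\mu',a)\;,
\]
which is the engine of both inequalities.

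For $V_n(\mu)\geq W_n(\mu)$, I would take an arbitrary $a\in{}_n\tilde{\mathfrak{C}}(\mu)$. Since $M^{n+1,\mu',a}_k=M^{n,\mu,a}_k$ for $k\geq n+1$, the same sequence $a$ lies in ${}_{n+1}\tilde{\mathfrak{C}}(\mu')$, whence $\bar J_{n+1}(\mu',a)\geq V_{n+1}(\mu')$; the decomposition then gives $\bar J_n(\mu,a)\geq C_n(\mu,a_n)+V_{n+1}(\mu')\geq W_n(\mu)$, the last step because $a_n\in\tilde{\mathfrak{C}}_n(\mu)$ is admissible in the infimum defining $W_n$. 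Taking the infimum over $a$ yields the inequality. For the reverse inequality I would fix $\eps>0$, pick $a_n\in\tilde{\mathfrak{C}}_n(\mu)$ that is $\eps$-optimal for $W_n(\mu)$, set $\mu'=\bar P^{a_n}_{n+1}\mu$, then pick $b\in{}_{n+1}\tilde{\mathfrak{C}}(\mu')$ that is $\eps$-optimal for $V_{n+1}(\mu')$, and concatenate by setting $\tilde a_n=a_n$ and $\tilde a_k=b_k$ for $k\geq n+1$. By the consistency property $M^{n,\mu,\tilde a}_k=M^{n+1,\mu',b}_k$ for $k\geq n+1$, so $\tilde a\in{}_n\tilde{\mathfrak{C}}(\mu)$, and the decomposition gives $V_n(\mu)\leq\bar J_n(\mu,\tilde a)=C_n(\mu,a_n)+\bar J_{n+1}(\mu',b)\leq W_n(\mu)+2\eps$; letting $\eps\downarrow0$ closes the argument.

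The main obstacle is not analytic but the careful bookkeeping of the trajectory-dependent admissibility sets $\tilde{\mathfrak{C}}_k(M)$: because the square-integrability constraint on $a_k$ is imposed against the measure $M^{n,\mu,a}_k$, which itself depends on $a$, I must check at each restriction or concatenation that the glued control remains admissible. This is precisely where the flow consistency $M^{n,\mu,\tilde a}_k=M^{n+1,\mu',b}_k$ is indispensable, since it guarantees that the constraint seen by $b_k$ in the tail problem is identical to the one seen by $\tilde a_k$ in the full problem. Because the problem is deterministic on the spaces $\Pc_2((E\times F)^{n+1})$, no measurable-selection difficulty arises: the $\eps$-optimal tail $b$ is chosen for the single fixed measure $\mu'$, so a plain $\eps$-optimization suffices.
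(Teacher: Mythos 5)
Your proposal is correct and follows essentially the same route as the paper's proof: both directions are handled via the flow-consistency identity $M^{n,\mu,a}_k=M^{n+1,\bar P^{a_n}_{n+1}\mu,a}_k$ for $k\geq n+1$, with the first inequality obtained by restricting an arbitrary admissible control to the tail problem and the second by concatenating an $\eps$-optimal first action with an $\eps$-optimal tail control. The only (harmless) difference is presentational: you state the additive decomposition $\bar J_n(\mu,a)=C_n(\mu,a_n)+\bar J_{n+1}(\mu',a)$ and the admissibility bookkeeping explicitly, which the paper leaves implicit.
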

\begin{proof}Denote by $Z_n(\mu)$ the right-hand-side of \eqref{Dyn-prog}. Fix a strategy $a\in{}_{n}\tilde{\mathfrak{C}}(\mu)$. We then have
$ a\in{}_{n+1}\tilde{\mathfrak{C}}(\bar P^{a_n}_n\mu)$ .
We then notice that
\beqs
M^{n+1,\bar P^{a_n}_{n+1}\mu,a}_k & = & M^{n,\mu,a}_k
\enqs
for  $k=n+1,\ldots,T$. Therefore, we have
\beqs
V_{n+1}( \bar P^{a_n}_{n+1}\mu)
 & \leq & \sum_{k=n+1}^{T-1}C_k\big(M^{n,\mu,a}_k,a_k\big)+\Gamma\big(M^{n,\mu,a}_T\big) 
\enqs
and
\beqs
C_n\big(\mu,a_n\big)+V_{n+1}( \bar P^{a_n}_{n+1}\mu) & \leq & \bar J_n(\mu,a).
\enqs
Taking the infimum over $a\in{}_{n}\tilde{\mathfrak{C}}(\mu)$, we get
\beqs
Z_n(\mu) & \leq & V_n(\mu)\;.
\enqs
We turn to the reverse inequality. Fix $\eps>0$ and a strategy $a^\eps_n\in\tilde{\mathfrak{C}}_n(\mu)$ such that
\beq\label{cond1PPDleq}
C_n\big(\mu,a^\eps_n\big)+V_{n+1}( \bar P^{a_n^\eps}_{n+1}\mu
)-{\frac{\eps}{ 2}} & \leq & Z_n(\mu)\;.
\enq
We now fix a strategy $\tilde a^\eps\in {}_{n+1}\tilde{\mathfrak{C}}(\bar P^{a_n^\eps}_{n+1}\mu)$ such that
\beq\label{cond2PPDleq}
\bar J_{n+1}( \bar P^{a_n^\eps}_{n+1}\mu, 
\tilde a ^\eps) - \frac{\eps}{ 2} & \leq & V_{n+1}( \bar P^{a_n^\eps}_{n+1}\mu)
\;.
\enq
We define $\hat a ^\eps$ as the concatenation of $a^\eps_n$ and  $\tilde a^\eps$:
\beqs
\hat a ^\eps_k(.) & = & a^\eps_n(.)\mathds{1}_{k=n}+\tilde a_k^\eps(.)\mathds{1}_{k\neq n}\;.
\enqs
 Then we have $\hat a \in {}_n\tilde{\mathfrak{C}}(\mu)$, $M^{n,\mu,\hat a^\eps}_{n+1}  =   \bar P^{a_n^\eps}_{n+1}\mu
$ 
and
\beqs
M^{n,\mu,\hat a^\eps}_{k} & = & M^{n+1,\bar P^{a_n^\eps}_{n+1}\mu,\tilde a^\eps}_{k}
\enqs
for $k=n+1,\ldots,T$. Therefore we get from \eqref{cond1PPDleq} and \eqref{cond2PPDleq}
\beqs
\bar J_n(\mu, \hat a ^\eps) -\eps & \leq & Z_n(\mu)
\enqs
and 
\beqs
V_n(\mu) -\eps & \leq & Z_n(\mu)\;.
\enqs
Since $\eps$ is arbitrarily chosen, we get  $V_n(\mu) \leq  Z_n(\mu)$.
\end{proof}
We now provide a verification result for the optimal value and an optimal strategy.
\begin{Theorem}[Verification]\label{VerifTHM}
Consider the functions $W_0,\ldots,W_T$ defined by
\beqs
W_T(\mu) & = & \Gamma (\mu)\;,\quad \mu\in \Pc_2((E\times F)^{T+1})\;,
\enqs
and 
\beq\label{DynVerif}
W_n(\mu) & = & \inf_{a_n\in\tilde{\mathfrak{C}}_n(\mu)} \big\{C_n\big(\mu,a_n\big)+W_{n+1}( \bar P^{a_n}_{n+1}\mu)\big\}\;,\quad \mu\in \Pc_2((E\times F)^{n+1})
\enq
 for $ n=0,\ldots, T-1$. Then $W_n=V_n$ for $n=0,\ldots,T$. 
 
 Suppose that for  any $n=0,\ldots,T-1$ and any $\mu\in\Pc_2((E\times F)^n)$, there exist  a function $a_n^{\mu,*}\in \tilde{\mathfrak{C}}_n(\mu)$ such that
\beq\label{strat-opt}
W_n(\mu) & = &  C_n\big(\mu,a^{\mu,*}_n\big)+W_{n+1}( \bar P^{a^{\mu,*}_{n+1}}_n\mu). 
\enq 
Then for a given starting measure $\mu\in\Pc_2(E\times F)$, the strategy $a^{*}$ defined by $a^{*}_0=a^{*,\mu}_0$ and
\beq\label{def-strat-opt}
a^{*}_n & = & a^{M_{n}^{0,\mu,a^*},*}_n\;,~n=1,\ldots,T-1\;,
\enq
is optimal: 
\beqs
V_0(\mu) & = & \bar J(\mu,a^*)\;.
\enqs
\end{Theorem}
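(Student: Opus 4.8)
The plan is to prove the two assertions separately: first the identification $W_n = V_n$ for every $n$, and then the optimality of the strategy $a^*$ built from the pointwise minimizers.

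For $W_n = V_n$ I would proceed by backward induction on $n$. At the terminal stage the criterion $\bar J_T$ contains no running cost, so $V_T(\mu) = \Gamma(M_T^{T,\mu,a}) = \Gamma(\mu) = W_T(\mu)$ using $M_T^{T,\mu,a} = \mu$. For the inductive step, suppose $W_{n+1} = V_{n+1}$; the dynamic programming principle of Lemma~\ref{dynprog} states $V_n(\mu) = \inf_{a_n \in \tilde{\mathfrak{C}}_n(\mu)}\{C_n(\mu,a_n) + V_{n+1}(\bar P^{a_n}_{n+1}\mu)\}$, and replacing $V_{n+1}$ by $W_{n+1}$ on the right turns this into the defining recursion \eqref{DynVerif} for $W_n$. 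Hence $W_n = V_n$ throughout, so that this half is a direct corollary of Lemma~\ref{dynprog}.

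For the optimality of $a^*$, I would first note that the recursion \eqref{def-strat-opt} is well-posed despite its self-referential appearance: writing $M_n := M_n^{0,\mu,a^*}$, the measure $M_n$ depends only on $a_0^*,\ldots,a_{n-1}^*$ through \eqref{dynM}, so the pairs $(a_n^*, M_{n+1})$ can be defined forward in $n$ starting from $M_0 = \mu$. Admissibility $a^* \in {}_0\tilde{\mathfrak{C}}(\mu)$ is then immediate, since by hypothesis each minimizer $a_n^{M_n,*}$ lies in $\tilde{\mathfrak{C}}_n(M_n)$, which is exactly the membership condition defining ${}_0\tilde{\mathfrak{C}}(\mu)$. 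The optimality itself comes from telescoping: evaluating the selector identity \eqref{strat-opt} at $\mu = M_n$ gives $W_n(M_n) = C_n(M_n,a_n^*) + W_{n+1}(M_{n+1})$, where I have used $M_{n+1} = \bar P^{a_n^*}_{n+1}M_n$. Summing from $n=0$ to $T-1$ and invoking $W_T = \Gamma$ collapses the telescope to $W_0(\mu) = \sum_{n=0}^{T-1} C_n(M_n,a_n^*) + \Gamma(M_T) = \bar J(\mu,a^*)$, and since $W_0 = V_0$ this is the desired equality $V_0(\mu) = \bar J(\mu,a^*)$.

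I expect the main obstacle to be bookkeeping rather than conceptual, namely confirming that the sequence $(M_n)$ produced by the selectors is identical to the controlled chain $(M_n^{0,\mu,a^*})$ entering $\bar J$, and keeping the admissibility indices consistent across stages. A point worth stressing is that no measurable-selection theorem is needed: the existence of minimizers is assumed, and they are only ever evaluated along the single deterministic trajectory $(M_n)$, so measurability of $\mu \mapsto a_n^{\mu,*}$ plays no role in the argument.
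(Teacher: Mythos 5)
Your proof is correct and follows essentially the same route as the paper: backward induction anchored on Lemma~\ref{dynprog} to identify $W_n=V_n$, then telescoping the selector identity \eqref{strat-opt} along the forward-constructed trajectory $(M_n^{0,\mu,a^*})$ to conclude $W_0(\mu)=\bar J(\mu,a^*)=V_0(\mu)$. The only cosmetic difference is that the paper proves $W_n\le V_n$ directly from the recursion \eqref{DynVerif} (unrolling the infimum along an arbitrary admissible control, without the DPP) and invokes Lemma~\ref{dynprog} only for the reverse inequality, whereas you obtain both directions at once from the DPP equality; the two arguments are logically equivalent, and your observation that no measurable selection is needed is consistent with the paper's treatment.
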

\begin{proof}Fix $n=0,\ldots,T-1$, $\mu\in\Pc_2((E\times F)^{n+1})$ and a control $a\in {}_n\tilde{\mathfrak{C}}(\mu)$. Since $W_T=\Gamma$, we get by \eqref{DynVerif} and a straightforward backward induction 
\beqs
W_n(\mu) & \leq & \sum_{k=n}^{T-1}C_k\big(M^{n,\mu,a}_k,a_k\big)+\Gamma\big(M^{n,\mu,a}_T\big) \;.
\enqs
Since $a\in {}_n\tilde{\mathfrak{C}}(\mu)$ is arbitrarily chosen, we get
\beqs
W_n(\mu) & \leq & V_n(\mu) 
\enqs
for $\mu\in\Pc_2((E\times F)^{n+1})$ and $n=0,\ldots,T$.

We now prove the reverse inequality by a backward induction. First we have $V_T=\Gamma=W_T$, so $W_T\geq V_T$.

Suppose that we have $W_{n+1}\geq V_{n+1}$. Using Lemma \ref{dynprog} and  \eqref{DynVerif} we get $W_n\geq V_n$. Therefore,  the result holds true for any $n=0,\ldots,T$.

Fix now $\mu\in\Pc_2(E\times F)$ and consider the strategy $a^{*}$ defined by \eqref{def-strat-opt}.
Writing the equality \eqref{DynVerif} at each step we get
\beqs
W_0(\mu) & = & \hat J(\mu,a^*)
\enqs
and $a^*$ is optimal since $W_0(\mu)=V_0(\mu)$.
\end{proof}
In the previous result, the verification condition \eqref{DynVerif} is written over the set $\tilde{\mathfrak{C}}_n$ which might be too large.   Indeed, as the dynamics \eqref{dynM}  of $M^{n,\mu,a}$ involves the past only through the control $a$, one may wonder if condition \eqref{DynVerif} can be reduced to closed loop controls, $i.e.$ controls depending on the the present value of $M^{n,\mu,a}$, and if an optimal strategy of this form can be derived. This is possible under the condition that the starting position measure $\mu$ already has this structure.

More precisely, we introduce,   for $n=0,\ldots,T-1$ and $\mu\in \Pc_2((E\times F)^n)$ the subset $\tilde{\mathfrak{C}}_{n,\textrm{cl}}(\mu)$ of $\tilde{\mathfrak{C}}_n(\mu)$ composed by  control functions $a_n$ depending only on the last component 
\beqs
a_n (y_{0:n}) & = & a_n(y_{n})\;,\quad y_{0:n}\in F^{n+1}\;.
\enqs 
 We then define for $\mu\in \Pc_2(E\times F)$ the set  $\tilde{\mathfrak{C}}_{\textrm{cl}}(\mu)$ as the set of controls $a=(a_0,\ldots,a_{T-1})\in{}_0\tilde{\mathfrak{C}}(\mu)$ such that $a_n\in  \tilde{\mathfrak{C}}_{n,\textrm{cl}}(M^{0,\mu,a}_n)$ for $n=0,\ldots,T-1$. 
We finally denote by $\Pc_{\textrm{cl}}\big((E\times F)^n\big)$ the subset of $\Pc_2\big((E\times F)^n\big)$ composed by laws of controlled processes $(X^a_{0:n},Y^a_{0:n})$ for $a\in \tilde{\mathfrak{C}}_{\textrm{cl}}(\mu)$ with $\P_{(X_0^a,Y_0^a)}=\mu$ and $\mu\in\Pc_2(E\times F)$. We notice that $\Pc_{\textrm{cl}}\big(E\times F\big)=\Pc_{2}\big(E\times F\big)$ as the initial position does not depend on the control. We can now state the verification result for closed loop controls.  

\begin{Theorem}[Verification with closed loop controls]\label{VerifTHMCL}
Consider the functions $W_0,\ldots,W_T$ defined by
\beqs
W_T(\mu) & = & \Gamma (\mu)\;,\quad \mu\in \Pc_{\textrm{cl}}((E\times F)^{T+1})\;,
\enqs
and 
\beq\label{DynVerifCL}
W_n(\mu) & = & \inf_{a_n\in\tilde{\mathfrak{C}}_{n}(\mu)} \big\{C_n\big(\mu,a_n\big)+W_{n+1}( \bar P^{a_n}_{n+1}\mu)\big\}\;,\quad \mu\in \Pc_{\textrm{cl}}((E\times F)^{n+1})\;,
\enq
 for $ 0\leq n\leq T-1$. Then $W_n=V_n$ for $n=0,\ldots,T$ on $\Pc_{\textrm{cl}}((E\times F)^{n+1})$. 
 Suppose that for  any $n=0,\ldots,T-1$ there exists a measurable function $\mathfrak{a}_n:~\Pc_2((E\times F)^n)\times F\rightarrow  C$ such that
\beq\label{strat-optCL}
W_n(\mu) & = &  C_n\big(\mu,\mathfrak{a}_n(\mu,.)\big)+W_{n+1}( \bar P^{\mathfrak{a}_n(\mu,.)}_{n+1}\mu). 
\enq 
and $\mathfrak{a}_n(\mu,.)\in\tilde{\mathfrak{C}}_{n}$ for all $\mu\in\Pc_{\textrm{cl}}((E\times F)^{n})$.
Then for a given starting measure $\mu\in\Pc_2(E\times F)$, the strategy $a^{*}$ defined by $a^{*}_0=\mathfrak{a}_0(\mu,.)$ and
\beqs
a^{*}_n & = & \mathfrak{a}_n(M_{n}^{0,\mu,a^*},.)
\;,~n=1,\ldots,T-1\;,
\enqs
is optimal: 
\beqs
V_0(\mu) & = & \bar J(\mu,a^*)\;.
\enqs
\end{Theorem}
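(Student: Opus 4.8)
The plan is to mirror the two-part argument of Theorem~\ref{VerifTHM}, the only genuinely new ingredient being that every measure produced along the way must be checked to remain in the closed-loop class $\Pc_{\textrm{cl}}$. The pivotal structural fact, which I would isolate first, is that $\Pc_{\textrm{cl}}$ is stable under closed-loop transitions: if $\nu\in\Pc_{\textrm{cl}}((E\times F)^{n+1})$ and $b_n\in\tilde{\mathfrak{C}}_{n,\textrm{cl}}(\nu)$ depends on $y_n$ alone, then $\bar P^{b_n}_{n+1}\nu\in\Pc_{\textrm{cl}}((E\times F)^{n+2})$. This holds essentially by definition: writing $\nu=\P_{(X^a_{0:n},Y^a_{0:n})}$ for some $a\in\tilde{\mathfrak{C}}_{\textrm{cl}}(\mu)$, the control $a'$ obtained by appending $b_n$ at step $n$ is still closed-loop and still admissible along its own trajectory, so $\bar P^{b_n}_{n+1}\nu=\P_{(X^{a'}_{0:n+1},Y^{a'}_{0:n+1})}$ again lies in $\Pc_{\textrm{cl}}$. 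This is what guarantees that the recursion \eqref{DynVerifCL} is well posed and that every evaluation of $W_{n+1}$ below occurs at an admissible argument.

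For the identity $W_n=V_n$ on $\Pc_{\textrm{cl}}((E\times F)^{n+1})$ I would observe that \eqref{DynVerifCL} has exactly the form of \eqref{DynVerif} with the same terminal datum $W_T=\Gamma$, and that by the stability fact the minimization, when carried out over closed-loop controls, never leaves $\Pc_{\textrm{cl}}$. A backward induction identical to the one in Theorem~\ref{VerifTHM} --- pairing the telescoping upper bound with the dynamic programming principle of Lemma~\ref{dynprog} read on the restricted domain $\Pc_{\textrm{cl}}$ --- then yields $W_n=V_n$ there, provided one knows that confining the one-step minimization to closed-loop controls does not raise the value. This last point is the crux: both $C_n(\mu,a_n)=\int c_n(x_n,{}^1_n\mu,a_n(y_{0:n}))\,d\mu$ and the transition $\bar P^{a_n}_{n+1}\mu$ depend on $a_n$ only through its $\mu$-almost-everywhere pointwise values, so the minimization decouples over the observation history, and on the Markovian measures of $\Pc_{\textrm{cl}}$, where the filter is a sufficient statistic, the optimal one-step decision may be taken to depend on the current observation $y_n$ alone.

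Granting $W_n=V_n$ on $\Pc_{\textrm{cl}}$, the construction of the optimal strategy is a clean forward induction. Setting $a^{*}_0=\mathfrak{a}_0(\mu,\cdot)$ and $a^{*}_n=\mathfrak{a}_n(M^{0,\mu,a^*}_n,\cdot)$, I would prove by induction on $n$ that $M^{0,\mu,a^*}_n\in\Pc_{\textrm{cl}}((E\times F)^{n+1})$: the base case is $M^{0,\mu,a^*}_0=\mu\in\Pc_2(E\times F)=\Pc_{\textrm{cl}}(E\times F)$, and the inductive step is precisely the stability fact applied to the closed-loop control $\mathfrak{a}_n(M^{0,\mu,a^*}_n,\cdot)$, which is admissible by hypothesis. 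Consequently $a^{*}\in\tilde{\mathfrak{C}}_{\textrm{cl}}(\mu)$ and each instance of \eqref{strat-optCL} is legitimate along the trajectory. Evaluating \eqref{strat-optCL} at $\mu=M^{0,\mu,a^*}_n$ for $n=0,\ldots,T-1$ and telescoping, using $M^{0,\mu,a^*}_{n+1}=\bar P^{a^*_n}_{n+1}M^{0,\mu,a^*}_n$ and $W_T=\Gamma$, gives $W_0(\mu)=\sum_{n=0}^{T-1}C_n(M^{0,\mu,a^*}_n,a^*_n)+\Gamma(M^{0,\mu,a^*}_T)=\bar J(\mu,a^{*})$. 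Since $\mu\in\Pc_{\textrm{cl}}(E\times F)$ and $W_0=V_0$ there, this reads $V_0(\mu)=\bar J(\mu,a^{*})$, so $a^{*}$ is optimal.

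I expect the main obstacle to be the sufficiency of closed-loop controls underlying the second paragraph: showing that, on the class $\Pc_{\textrm{cl}}$, the history-dependence of an optimal one-step control can genuinely be absorbed into the current measure $M^{0,\mu,a^*}_n$ together with the current observation $y_n$, so that no value is lost by confining the minimization in \eqref{DynVerifCL} to closed-loop controls. The remainder is a faithful transcription of Lemma~\ref{dynprog} and Theorem~\ref{VerifTHM}, the only additional work being the routine but necessary bookkeeping that certifies membership in $\Pc_{\textrm{cl}}$ at each step.
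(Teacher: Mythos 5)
Your stability observation and your final paragraph are sound, and they are in fact the whole content of the adaptation the paper has in mind (the paper omits the proof, stating only that it follows the lines of Theorem \ref{VerifTHM}): one shows by forward induction, using stability of $\Pc_{\textrm{cl}}$ under one-step feedback transitions, that $M^{0,\mu,a^*}_n\in\Pc_{\textrm{cl}}((E\times F)^{n+1})$ for every $n$, so that \eqref{strat-optCL} may legitimately be invoked along the trajectory, and then telescopes \eqref{strat-optCL} together with $M^{0,\mu,a^*}_{n+1}=\bar P^{a^*_n}_{n+1}M^{0,\mu,a^*}_n$ and $W_T=\Gamma$ to get $W_0(\mu)=\bar J(\mu,a^*)$, hence $V_0(\mu)=\bar J(\mu,a^*)$ since $\Pc_{\textrm{cl}}(E\times F)=\Pc_2(E\times F)$.

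The genuine gap is in your second paragraph, and it stems from a misreading of \eqref{DynVerifCL}: the infimum there is taken over the full class $\tilde{\mathfrak{C}}_{n}(\mu)$, not over $\tilde{\mathfrak{C}}_{n,\textrm{cl}}(\mu)$, so the recursion is identical to \eqref{DynVerif} read on the smaller domain $\Pc_{\textrm{cl}}$, and the identity $W_n=V_n$ there needs nothing beyond Theorem \ref{VerifTHM} and Lemma \ref{dynprog}. By instead restricting the one-step minimization to closed-loop controls, you are forced to prove what you call ``the crux'' --- that controls depending on $y_n$ alone lose no value on $\Pc_{\textrm{cl}}$ --- and this claim is false in general, while your justification for it does not hold up. First, $W_{n+1}(\bar P^{a_n}_{n+1}\mu)$ is a nonlinear functional of the image measure, not an integral against $\mu$, so the minimization does not decouple over observation histories. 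Second, even for the decoupled cost term $C_n(\mu,a_n)$, the pointwise minimizer at $y_{0:n}$ depends on the conditional law of $X_n$ given $Y_{0:n}=y_{0:n}$, i.e.\ on the filter $\Pi^a_{n,y_{0:n}}$, which is a function of the entire observation history and not of $y_n$ alone; invoking ``the filter is a sufficient statistic'' therefore argues for history dependence, not against it. Sufficiency of $y_n$-feedback controls is precisely what the hypothesis \eqref{strat-optCL} is there to \emph{assume} (existence of a feedback minimizer of the unrestricted infimum), and the paper establishes it only in the linear-quadratic model, via the conditional Jensen inequality under positivity of $R_n$ and nonnegativity of $\Lambda_{n+1}$ and $\Lambda_{n+1}+\Theta_{n+1}$ --- not by any general structural argument. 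Dropping your second paragraph, and replacing it by the remark that \eqref{DynVerifCL} is \eqref{DynVerif} restricted to $\Pc_{\textrm{cl}}$, turns your proposal into a correct proof.
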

\begin{proof} The proof follows exactly the same lines as that of Theorem \ref{VerifTHM} and is therefore omitted.
%
%
%
\end{proof}

\section{Applications}\label{Sec4}
\subsection{Linear-quadratic case} We take $p=p'=p''=d'=d$ in this section. 
We suppose that the processes $X$ and $Y$ have the following dynamics
\beq\label{dynX}
X_{k+1}^a & = & B_kX^a_k+\bar B_k \E[X^a_k]+D_k a_k(Y^a_{0:k})+\eps_{k+1}\\\label{dynY}
Y_{k+1}^a & = & J_{k+1}X^a_{k+1}+\eta_{k+1}\;,\label{dynYLQ}
\enq
where $B_k$, $\bar B_k$ $D_k$ and $J_k$ are deterministic $d\times d$ matrices and  $\eps_k$ and $\eta_k$ follow $\Nc(0,I_d)$. We also suppose that $X_0$ and $Y_0$ are independent and follow $\Nc(0,I_d)$. We then define the cost function $\tilde J$ by 
\beqs
\tilde J(a) & := & \E\Big[ \sum_{k=0}^{T-1}{X_k^a}^\top Q_kX_k^a+\E[X_k^a]^\top \bar Q_k\E[X_k^a]
+a(Y^a_{0:k})^\top R_ka(Y^a_{0:k})\\
 & & +{X_T^a}^\top Q_TX_T^a+\E[X_T^a]^\top \bar Q_T\E[X_T^a]
  \Big]\;,
\enqs
for any strategy $a\in\tilde{\mathfrak{C}}$. In this case  \textbf{(H1)}-\textbf{(H2)}-\textbf{(H3)} are satisfied and we have
\beqs
h_k(x,y,a,e) ~~=~~h_k(x,e) & = & \frac{1}{ (\sqrt{2\pi})^d}\exp\Big(-\frac{1}{2}\big|e-J_{k}x\big|^2\Big)\;,\quad x,y,a,e\in \R^d\;, 
\enqs
which is the density of the law $\Nc(J_{k}x,I_d)$.  We now introduce some notations. 
For $\mu\in\Pc_2(E\times F)$ and $\Lambda\in \R^{2d\times 2d}$, we set 
\beqs
\bar \mu & = & \int_{\R^{2d} } \left(\begin{array}{c}x\\y\end{array}\right) d\mu(x,y)
\enqs
 and 
\beqs
\langle \mu\rangle (\Lambda) & = & \int_{\R^{2d} }\left(\begin{array}{c}x\\y\end{array}\right)^\top\Lambda \left(\begin{array}{c}x\\y\end{array}\right)d\mu(x,y)\;.
%
\enqs
We also define the matrices $\mathbf{Q}_k, \bar{\mathbf{Q}}_k\in\R^{2d\times2d}$ by
\beqs
\mathbf{Q}_k  =  \left(\begin{array}{cc}Q_k & 0\\ 0 & 0 \end{array}\right)  & \mbox{ and } &\bar{\mathbf{Q}}_k  =  \left(\begin{array}{cc}\bar Q_k & 0\\ 0 & 0 \end{array}\right) 
\enqs
for $k=0,\ldots,T$.
The functions $C_k$ and $\Gamma$ appearing in the definition \eqref{defJbar} of $\bar J$ are then given by
\beqs
C_k(\mu,a) & = & \langle {}_k\mu\rangle(\mathbf{Q}_k)+{}_k\bar{\mu}^\top\mathbf{Q}_k{}_k\bar{\mu}
+\int a(y_{0:k})^\top R_ka(y_{0:k})d\mu(x_{0:k},y_{0:k})
\enqs
for  $\mu\in \Pc_2((E\times F)^{k+1})\;,~a\in \tilde{\mathfrak{C}}_k(\mu)$, $k=0,\ldots,T-1$,  and 
\beqs
\Gamma(\mu) & = & \langle {}_T\mu\rangle (\mathbf{Q}_T)+{}_T\bar{\mu}^\top\big(\mathbf{Q}_T\big){}_T\bar{\mu}\;,\quad \mu\in \Pc_2((E\times F)^{T+1})\;,
\enqs
where we recall that  ${}_k\mu$ stands for the $k$-th marginal of $\mu$ for $k=0,\ldots,T$. 

We look for candidates $W_k$, $k = 0,\ldots,T$,  satisfying the verification Theorem. For that we chose an ansatz in the following quadratic form:
\beq\label{ansatz}
W_k(\mu) & = & \langle {}_k\mu\rangle ( \Lambda_k ) +  {}_k\bar \mu^\top \Theta_k {}_k\bar \mu +
 \chi_k
\enq
for $\mu\in \Pc_2((E\times F)^{k+1})$. 
We next suppose that $J_1,\ldots,J_T$  and $D_0,\ldots,D_{T-1}$ are all invertible and that $Q_1,Q_1+\bar Q_1,\ldots,Q_T,Q_T+\bar Q_T$ are symmetric nonnegative and $R_1,\ldots,R_T$ are all symmetric positive.
We then have the following result.
\begin{Proposition} \label{kalman_bucy_proposition}
There exists  $\Lambda_k,\Theta_k\in\R^{2d\times 2d}$ symmetric, with $\Lambda_k$ and $\Theta_k+\Lambda_k$ nonnegative,  and $\chi_k\in\R$, $k=0,\ldots,T$, symmetric nonnegative such that 
 the functions $W_k$, $k=0,\ldots,T$, given  by \eqref{ansatz}  satisfy the verification Theorem \ref{VerifTHMCL} 
with a feedback optimal strategy $\mathfrak{a}$ of the form
\beqs
\mathfrak{a}_n(\mu,y) & = & G_n\Big(\Xi_n \Phi_n(\mu,y)+ \hat{\mathbf{S}}_n^\top 
    {}_{n}\bar{\mu}\Big)\;,\quad y\in \R^d\;,~\mu\in \Pc_{\textrm{cl}}((\R^d\times\R^d)^{n+1})\;,
\enqs
with $\Phi_n$ given by 
\beq\label{defPhi}
\Phi_n(\mu,y) & = & \int_{\R^d} x_n \frac{e^{-\frac{1}{2}|J_nx_n-y_n|^2}}{\int_{\R^d}e^{-\frac{1}{2}|J_nx'_n-y_n|^2}d\;{}^1_n\mu(x'_n)}d\;{}^1_n\mu(x_n),
\enq
and $G_n\in\R^{d\times d}$, $\Xi_n\in\R^{d\times d}$ and $\hat{\mathbf{S}}_n\in\R^{2d\times d}$ 
 for $n=0,\ldots,T-1$

\end{Proposition}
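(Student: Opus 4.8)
The strategy is to verify the quadratic ansatz \eqref{ansatz} by backward induction on $n=T,T-1,\ldots,0$, at each step solving the inner minimization in \eqref{DynVerifCL} explicitly and matching coefficients to obtain recursions for $\Lambda_n,\Theta_n,\chi_n$ together with the feedback gains $G_n,\Xi_n,\hat{\mathbf{S}}_n$. The base case is immediate since $W_T=\Gamma$ forces $\Lambda_T=\mathbf{Q}_T$, $\Theta_T=\bar{\mathbf{Q}}_T$ and $\chi_T=0$, which are symmetric with $\Lambda_T$ and $\Lambda_T+\Theta_T$ nonnegative by the standing hypotheses on $Q_T,\bar Q_T$. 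For the induction step, I would fix $\mu\in\Pc_{\textrm{cl}}((E\times F)^{n+1})$ and a closed-loop control $a_n$, then compute $\bar P^{a_n}_{n+1}\mu$ using the linear-Gaussian dynamics \eqref{dynX}--\eqref{dynY}: the push-forward of the $(n{+}1)$-th marginal under $X_{n+1}=B_nX_n+\bar B_n{}^1_n\bar\mu+D_na_n(Y_{0:n})+\eps_{n+1}$ and $Y_{n+1}=J_{n+1}X_{n+1}+\eta_{n+1}$ gives the new marginal mean and second-moment matrix as affine/quadratic functions of the old ones and of the control.

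**The minimization.**
Substituting the transported measure into $W_{n+1}$ via the ansatz, the quantity to minimize over $a_n$ is
\beqs
C_n(\mu,a_n)+\langle {}_{n+1}(\bar P^{a_n}_{n+1}\mu)\rangle(\Lambda_{n+1})+{}_{n+1}\overline{(\bar P^{a_n}_{n+1}\mu)}^\top\Theta_{n+1}\,{}_{n+1}\overline{(\bar P^{a_n}_{n+1}\mu)}+\chi_{n+1}.
\enqs
Because the running cost contributes the term $\int a_n(y_{0:k})^\top R_n a_n(y_{0:k})\,d\mu$ with $R_n$ positive definite, and the transported quadratic terms contribute a term quadratic in $a_n$ through $D_n$ and (after the observation map) $J_{n+1}$, this is a strictly convex quadratic functional of $a_n$ for each realization of the conditioning. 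The key reduction is that in a closed-loop measure only the conditional mean $\Phi_n(\mu,y_n)$ of $X_n$ given $Y_n=y_n$ (the filter, given explicitly by \eqref{defPhi} through the Gaussian density $h_n$) enters the first-order condition; pointwise minimization in $y_n$ then yields the affine-in-$(\Phi_n,{}_n\bar\mu)$ optimizer, which is exactly the announced form $\mathfrak{a}_n(\mu,y)=G_n(\Xi_n\Phi_n(\mu,y)+\hat{\mathbf{S}}_n^\top{}_n\bar\mu)$, with $G_n$ the inverse of the $R_n$-plus-positive-term Hessian, and $\Xi_n,\hat{\mathbf{S}}_n$ read off from the cross terms. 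Feeding this optimizer back in and collecting the $\langle\cdot\rangle$-, mean-, and constant-parts produces Riccati-type recursions for $\Lambda_n,\Theta_n,\chi_n$, which closes the induction and simultaneously certifies that the ansatz solves \eqref{DynVerifCL} and that $\mathfrak{a}_n$ achieves the infimum as required by \eqref{strat-optCL}.

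**The main obstacle.**
The genuinely delicate point is not the algebra of completing the square but establishing that the minimizer $\mathfrak{a}_n(\mu,\cdot)$ stays inside $\tilde{\mathfrak{C}}_n$ and that the induction preserves the class $\Pc_{\textrm{cl}}$ together with the sign conditions $\Lambda_n\succeq 0$ and $\Lambda_n+\Theta_n\succeq 0$. For the measurability/integrability, I would check that $\Phi_n(\mu,\cdot)$ is measurable and square-integrable under ${}^2_n\mu$ — this follows from the Gaussian filter being an honest conditional expectation of a square-integrable variable — so that $\mathfrak{a}_n(\mu,\cdot)$ satisfies \eqref{cond int strat}; the invertibility of $J_{n+1}$ and $D_n$ is what guarantees the filter is well defined and the control genuinely influences the state. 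For the definiteness, the recursion for $\Lambda_n$ has the standard Riccati structure $\Lambda_n=\mathbf{Q}_n+(\text{transport})-(\text{Schur complement of the }R_n\text{-term})$, and one must verify that the subtracted Schur complement never destroys nonnegativity; this is where the positivity of $R_1,\ldots,R_T$ and the nonnegativity of $Q_k$ and $Q_k+\bar Q_k$ are used, and the separate handling of the $\Lambda_n$ (pathwise) and $\Lambda_n+\Theta_n$ (mean) blocks reflects the mean-field splitting between fluctuations and the common mean. Carrying these sign propagations cleanly through the filter, rather than the matrix bookkeeping itself, is the part that requires care.
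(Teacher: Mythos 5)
Your overall architecture (backward induction on the quadratic ansatz, base case $\Lambda_T=\mathbf{Q}_T$, $\Theta_T=\bar{\mathbf{Q}}_T$, $\chi_T=0$, completion of squares, Riccati-type recursions) is the same as the paper's, but there is a genuine gap at the heart of the induction step. You minimize only over closed-loop controls $a_n(y_n)$ from the outset, whereas the verification identity \eqref{DynVerifCL} that the ansatz must satisfy takes the infimum over the \emph{full} class $\tilde{\mathfrak{C}}_n(\mu)$ of path-dependent controls $a_n(y_{0:n})$; the admissibility of this restriction is precisely what has to be proved. Your justification --- that ``in a closed-loop measure only the conditional mean $\Phi_n(\mu,y_n)$ of $X_n$ given $Y_n=y_n$ enters the first-order condition'' --- fails for path-dependent controls: there the cross term involves $\E[X_n\,|\,Y_{0:n}]$, the genuine filter, which depends on the whole observation path even when $\mu$ is generated by a feedback control (this is exactly why partial observation is nontrivial). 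The paper closes this gap with a dedicated argument: since $\mu\in\Pc_{\textrm{cl}}$ is the law of $(X^{\bar a}_{0:n},Y^{\bar a}_{0:n})$ for some closed-loop $\bar a$, it replaces an arbitrary $a_n\in\tilde{\mathfrak{C}}_n(\mu)$ by $\hat a_n(y)=\E\big[a_n(Y^{\bar a}_{0:n})\,\big|\,Y^{\bar a}_n=y\big]$ and shows, via the tower property for the mean terms and the conditional Jensen inequality for the convex quadratic terms (using $\Lambda_{n+1}$ nonnegative and $R_n$ positive), that this replacement does not increase the objective; only after this reduction can the cross term be rewritten through $\Phi_n(\mu,\cdot)$ of \eqref{defPhi} alone. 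Without this step you establish optimality of $\mathfrak{a}_n$ only within a subclass, so neither the claim that the ansatz solves \eqref{DynVerifCL} nor condition \eqref{strat-optCL} is proved.

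A second, related flaw is your claim that ``pointwise minimization in $y_n$'' yields the optimizer. Even within the closed-loop class the problem is not pointwise: the objective contains the mean-field coupling terms $\big(\int a_n\,d\,{}^2_n\mu\big)^\top\mathbf{N}_n\big(\int a_n\,d\,{}^2_n\mu\big)$ and $2\,{}_n\bar\mu^\top\mathbf{S}_n\int a_n\,d\,{}^2_n\mu$, which tie together the values of $a_n$ at different observation points. The first-order condition \eqref{FOC} is a functional equation coupling $a_n(y_n)$ with its ${}^2_n\mu$-mean; the paper resolves it in two stages, first integrating the condition against ${}^2_n\mu$ to solve for $\int a_n\,d\,{}^2_n\mu$ --- this is where the nonnegativity of $\Lambda_{n+1}+\Theta_{n+1}$ and the invertibility of $D_n^\top\mathbf{J}_{n+1}^\top(\Lambda_{n+1}+\Theta_{n+1})\mathbf{J}_{n+1}D_n+R_n$ are used --- and then substituting back. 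The specific form of $\hat{\mathbf{S}}_n$ (and consequently the recursion for $\Theta_n$) comes precisely from this two-stage resolution; ``reading it off from the cross terms,'' as you propose, cannot produce it.
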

\begin{proof}
We use a backward induction on $k$ to prove the following statement:

\vspace{2mm}

For $n=0,\ldots,N$, there exists $\Lambda_k,\Theta_k\in\R^{2d\times 2d}$ symmetric, with $\Lambda_k$ and $\Theta_k+\Lambda_k$ nonnegative, and $\chi_k\in\R$, $k=0,\ldots,T$, such that 
the functions $W_k$, $k=n,\ldots,T$, given  by \eqref{ansatz}  satisfy the verification Theorem \ref{VerifTHMCL} with a feedback optimal strategy $\mathfrak{a}$  of the form
\beqs
\mathfrak{a}_k(\mu,y) & = & G_k\Big(\Xi_k \Phi_k(\mu,y)+ \hat{\mathbf{S}}_k^\top 
    {}_{k}\bar{\mu}\Big)\;,\quad y\in \R^d\;,~\mu\in \Pc_{\textrm{cl}}((\R^d\times\R^d)^{k+1})\;,
\enqs
with $G_k\in\R^{d\times d}$, $\Xi_k\in\R^{d\times d}$ and $\hat{\mathbf{S}}_k\in\R^{2d\times d}$ 
 some matrices depending on the coefficients for $k=n,\ldots,T-1$ 


\vspace{2mm}

\noindent For $n=T$ a straightforward computation gives 
\beqs
\Lambda_T~=~\mathbf{Q}_T\,,~\Theta_T~=~\bar{\mathbf{Q}}_T  & \mbox{ and } & \chi_T~=~0\;. 
\enqs
Therefore, the property holds for $n=T$. 

Suppose that the property holds for $n+1$. Fix $\mu\in \Pc_{\textrm{cl}}((E\times F)^{n+1})$. From the induction assumption, we have
\beqs
\inf_{a_n\in\tilde{\mathfrak{C}}_{n}(\mu)} \big\{C_n\big(\mu,a_n\big)+W_{n+1}( M^{n,\mu,a_n}_{n+1})\big\} & = & \\
\inf_{a_n\in\tilde{\mathfrak{C}}_n(\mu)} \big\{C_n\big(\mu,a_n\big)+
\langle{}_{n+1}M^{n,\mu,a_n}_{n+1}\rangle ( \Lambda_{n+1} )  \\
+  ({}_{n+1}\bar{M}^{n,\mu,a}_{n+1})^\top \Theta_{n+1} ({}_{n+1}\bar M^{n,\mu,a_n}_{n+1}) 
 + \chi_{n+1}
\big\} & = & \\
\inf_{a_n\in\tilde{\mathfrak{C}}_n(\mu)} \big\{\langle{}_{n}\mu\rangle(\mathbf{Q}_n)+{}_{n}\bar{\mu}^\top\bar{\mathbf{Q}}_n{}_{n}\bar{\mu} & & \\
+\int a_n(y_{0:n})^\top R_na_n(y_{0:n})d\mu(x_{0:n},y_{0:n})+ \langle{}_{n+1}M^{n,\mu,a_n}_{n+1}\rangle ( \Lambda_{n+1} ) & & \\
+  ({}_{n+1}\bar{M}^{n,\mu,a}_{n+1})^\top \Theta_{n+1} ({}_{n+1}\bar M^{n,\mu,a_n}_{n+1}) 
+ \chi_{n+1}
\big\}\;.
\enqs
From \eqref{dynM} we have 
\beqs
{}_{n+1}\bar M^{n,\mu,a_n}_{n+1} & = & \int_{\R^{2d} }\left(\begin{array}{c}x_{n+1}\\y_{n+1}\end{array}\right)d \;{}_{n+1}M^{n,\mu,a_n}_{n+1}(x_{n+1},y_{n+1})\\
   & = &  \int_{\R^{2d} }\left(\begin{array}{c}x_{n+1}\\y_{n+1}\end{array}\right)\int_{(\R^{2d})^n }P^{a_n(y_{0:n})}_n(x_{n},{}_{n}^1\mu,dx_{n+1})h_n(x_{n+1},y_{n+1})dy_{n+1}d\mu(x_{0:n},y_{0:n})
  \enqs
  From \eqref{dynX}-\eqref{dynY} we get
  \beqs
{}_{n+1}\bar M^{n,\mu,a_n}_{n+1}   & = &  \frac{1}{(2\pi)^d}\int_{\R^{2d} }\left(\begin{array}{c}x_{n+1}\\y_{n+1}\end{array}\right)\int_{(\R^{2d})^n }e^{-\frac{1}{2}|x_{n+1}-(B_nx_n+\bar{B}_n {}_{n}^1\bar{\mu}+D_na_n(y_{0:n}))|^2}\\
 & & \qquad\qquad\qquad\qquad\qquad e^{-\frac{1}{2}|y_{n+1}-J_{n+1}x_{n+1}|^2}d\mu(x_{0:n},y_{0:n})dx_{n+1}dy_{n+1}\\
 & = & \int_{(\R^{2d})^n }\left(\begin{array}{c}(B_nx_n+\bar{B}_n {}_{n}^1\bar{\mu}+D_na_n(y_{0:n}))\\J_{n+1}(B_nx_n+\bar{B}_n {}_{n}^1\bar{\mu}+D_na_n(y_{0:n}))\end{array}\right)d\mu(x_{0:n},y_{0:n})\\
  & = & \mathbf{J}_{n+1}(B_n+\bar{B}_n) {}_{n}^1\bar{\mu}+  \mathbf{J}_{n+1}D_n\int_{(\R^{2d})^n}a_n(y_{0:n})d\mu(x_{0:n},y_{0:n})
\enqs
where 
\beqs
\mathbf{J}_{n+1} & = & \left(\begin{array}{c}I_d\\J_{n+1}\end{array}\right)\in \R^{2d\times d}\;.
\enqs
Therefore we get
\beqs
({}_{n+1}\bar  M^{n,\mu,a_n}_{n+1})^\top \Theta_{n+1} ({}_{n+1}\bar  M^{n,\mu,a_n}_{n+1}) & = & \\
{}_{n}\bar{\mu}^\top (\mathbf{B}_n+\bar{\mathbf{B}}_n)^\top\mathbf{J}_{n+1}^\top\Theta_{n+1} \mathbf{J}_{n+1}(\mathbf{B}_n+\bar{\mathbf{B}}_n){}_{n}^1\bar{\mu} &  & \\
 +2{}_{n}\bar{\mu}^\top (\mathbf{B}_n+\bar{\mathbf{B}}_n)^\top\mathbf{J}_{n+1}^\top\Theta_{n+1} \mathbf{J}_{n+1}D_n\int_{(\R^{2d})^n}a_n(y_{0:n})d\mu(x_{0:n},y_{0:n})& &\\
   +\Big(\int_{(\R^{2d})^n}a_n(y_{0:n})d\mu(x_{0:n},y_{0:n})\Big)^\top D_n^\top\mathbf{J}_{n+1}^\top\Theta_{n+1} \mathbf{J}_{n+1}D_n\int_{(\R^{2d})^n}a_n(y_{0:n})d\mu(x_{0:n},y_{0:n}) & &
\enqs
with
\beqs
\mathbf{B}_n ~ = ~ \left(\begin{array}{cc}B_n  &  0\end{array}\right)\in \R^{ d\times 2d}  & \mbox{and} & 
\bar{\mathbf{B}}_n ~ = ~ \left(\begin{array}{cc}\bar B _n & 0\end{array}\right)\in \R^{  d\times 2d} \;.
\enqs
We turn to the computation of the second order moment. 
  Still using \eqref{dynX}-\eqref{dynY}, a computation gives
  \beqs
\int_{\R^{2d} }\left(\begin{array}{c}x_{n+1}\\y_{n+1}\end{array}\right)^\top\Lambda_{n+1} \left(\begin{array}{c}x_{n+1}\\y_{n+1}\end{array}\right)d({}_{n+1}  M^{n,\mu,a_n}_{n+1})(x_{n+1},y_{n+1})  & = &  \\\int_{(\R^{2d})^n }{\frac{1}{ (2\pi)^d}}\int_{\R^{2d} }\left(\begin{array}{c}x_{n+1}\\y_{n+1}\end{array}\right)^\top\Lambda_{n+1} \left(\begin{array}{c}x_{n+1}\\y_{n+1}\end{array}\right)e^{-\frac{1}{2}|x_{n+1}-(B_nx_n+\bar{B}_n {}_{n}^1\bar{\mu}+D_na_n(y_{0:n}))|^2}\\
 e^{-\frac{1}{2}|y_{n+1}-J_{n+1}x_{n+1}|^2}dx_{n+1}dy_{n+1}d\mu(x_{0:n},y_{0:n}) & = & \\
\int_{(\R^{2d})^n} (B_nx_n+\bar{B}_n {}_{n}^1\bar{\mu}+D_na_n(y_{0:n}))^\top\mathbf{J}_{n+1} ^\top\Lambda_{n+1}\mathbf{J}_{n+1}(B_nx_n+\bar{B}_n {}_{n}^1\bar{\mu}+D_na_n(y_{0:n}))d\mu(x_{0:n},y_{0:n})\\
+ \textrm{Tr}(\mathbf{J}_{n+1}^\top\Lambda_{n+1}\mathbf{J}_{n+1})+ \textrm{Tr}(\mathbf{I}^\top \Lambda_{n+1}\mathbf{I})
\enqs
where 
\beqs
\mathbf{I}_{} & = & \left(\begin{array}{c}0 \\ I_d\end{array}\right)\in \R^{2d\times d}\;.
\enqs
We therefore finally get
\beqs
\langle{}_{n+1}M^{n,\mu,a_n}_{n+1}\rangle ( \Lambda_{n+1} ) & = & \\
\langle{}_{n}\mu\rangle ( \mathbf{B}_n^\top\mathbf{J}_{n+1}^\top\Lambda_{n+1}\mathbf{J}_{n+1}\mathbf{B}_n) & & \\
+{}_{n}\bar\mu^\top ( \bar{\mathbf{B}}_n^\top\mathbf{J}_{n+1}^\top\Lambda_{n+1}\mathbf{J}_{n+1}\bar{\mathbf{B}}_n+{\mathbf{B}}_n^\top\mathbf{J}_{n+1}^\top\Lambda_{n+1}\mathbf{J}_{n+1}\bar{\mathbf{B}}_n+\bar{\mathbf{B}}_n^\top\mathbf{J}_{n+1}^\top\Lambda_{n+1}\mathbf{J}_{n+1}{\mathbf{B}}_n ){}_{n}\bar\mu & & \\
+ \textrm{Tr}(\mathbf{J}_{n+1}^\top\Lambda_{n+1}\mathbf{J}_{n+1})+ \textrm{Tr}(\mathbf{I}^\top \Lambda_{n+1}\mathbf{I})\\
+ \int_{(\R^{2d})^n} a_n(y_{0:n})^\top {D}_n^\top \mathbf{J}_{n+1}^\top\Lambda_{n+1}\mathbf{J}_{n+1} D_na_n(y_{0:n})d\mu(x_{0:n},y_{0:n}) & & \\
+ 2\int_{(\R^{2d})^n} x_n^\top {B}_n^\top\mathbf{J}_{n+1}^\top\Lambda_{n+1}\mathbf{J}_{n+1} D_na_n(y_{0:n})d\mu(x_{0:n},y_{0:n}) & & \\
+2{}_n\bar \mu^\top\int_{(\R^{2d})^n}  \bar {\mathbf{B}}_n^\top \mathbf{J}_{n+1}^\top\Lambda_{n+1}\mathbf{J}_{n+1} D_na_n(y_{0:n})d\mu(x_{0:n},y_{0:n})\;. & & 
\enqs
We now go back to the definition of $W_n$:
\beqs
W_n(\mu)  & = &  \inf_{a_n\in\tilde{\mathfrak{C}}_n(\mu)} \Big\{\langle{}_{n}\mu\rangle(\mathbf{Q}_n)+{}_{n}\bar{\mu}^\top\bar{\mathbf{Q}}_n{}_{n}\bar{\mu}+\int a_n(y_{0:n})^\top R_na_n(y_{0:n})d\mu(x_{0:n},y_{0:n})
 \\
  & & \qquad \qquad+\langle{}_{n+1}M^{n,\mu,a_n}_{n+1}\rangle ( \Lambda_{n+1} )
 +  ({}_{n+1}\bar{M}^{n,\mu,a}_{n+1})^\top \Theta_{n+1}( {}_{n+1}\bar M^{n,\mu,a_n}_{n+1}) + \chi_{n+1}
\Big\} \\
 & = & \inf_{a\in\tilde{\mathfrak{C}}_n(\mu)} \Big\{\langle{}_{n}\mu\rangle(\mathbf{Q}_n+\mathbf{B}_n^\top\mathbf{J}_{n+1}^\top\Lambda_{n+1}\mathbf{J}_{n+1}\mathbf{B}_n)  +({}_{n}\bar{\mu})^\top \mathbf{K}_n
  ({}_{n}\bar{\mu})\\
   & & \qquad \qquad+ \textrm{Tr}(\mathbf{J}_{n+1}^\top\Lambda_{n+1}\mathbf{J}_{n+1})+\textrm{Tr}(\mathbf{I}^\top \Lambda_{n+1}\mathbf{I})+\chi_{n+1}\\
   & &  \qquad \qquad+ \Big(\int_{(\R^{2d})^n}a_n(y_{0:n})d\mu(x_{0:n},y_{0:n})\Big)^\top 
   \mathbf{N}_n\int_{(\R^{2d})^n}a_n(y_{0:n})d\mu(x_{0:n},y_{0:n})\\
    & & \qquad \qquad+ 2{}_{n}\bar{\mu}^\top 
    \mathbf{S}_n    \int_{(\R^{2d})^n} a_n(y_{0:n})d\mu(x_{0:n},y_{0:n})  \\
     & & \qquad \qquad+ 2\int_{(\R^{2d})^n} x_n^{\top} {B}_n^\top\mathbf{J}_{n+1}^\top\Lambda_{n+1}\mathbf{J}_{n+1} D_na_n(y_{0:n})d\mu(x_{0:n},y_{0:n}) \\
 & & \qquad \qquad+ \int_{(\R^{2d})^n} a_n(y_{0:n})^\top \big({D}_n^\top\mathbf{J}_{n+1}^\top\Lambda_{n+1}\mathbf{J}_{n+1} D_n+ R_n\big)a_n(y_{0:n})d\mu(x_{0:n},y_{0:n})\Big\}
\enqs
where
\beqs
\mathbf{K}_n & = &  \bar{\mathbf{Q}}_n+ (\mathbf{B}_n+\bar{\mathbf{B}}_n)^\top\mathbf{J}_{n+1}^\top\Theta_{n+1}\mathbf{J}_{n+1}(\mathbf{B}_n+\bar{\mathbf{B}}_n)+\bar{\mathbf{B}}_n^\top\mathbf{J}_{n+1}^\top\Lambda_{n+1}\mathbf{J}_{n+1}\bar{\mathbf{B}}_n\;,\\
 & &+{\mathbf{B}}_n^\top\mathbf{J}_{n+1}^\top\Lambda_{n+1}\mathbf{J}_{n+1}\bar{\mathbf{B}}_n+\bar{\mathbf{B}}_n^\top\mathbf{J}_{n+1}^\top\Lambda_{n+1}\mathbf{J}_{n+1}{\mathbf{B}}_n \;,\\
\mathbf{N}_n  & = & D_n^\top\mathbf{J}_{n+1}^\top\Theta_{n+1} \mathbf{J}_{n+1}D_n\;,\\
\mathbf{S}_n   & = & (\mathbf{B}_n+\bar{\mathbf{B}}_n)^\top\mathbf{J}_{n+1}^\top\Theta_{n+1} \mathbf{J}_{n+1}D_n+\bar {\mathbf{B}}_n^\top \mathbf{J}_{n+1}^\top\Lambda_{n+1}\mathbf{J}_{n+1} D_n\;.
\enqs
Since $\mu\in \Pc_{\textrm{cl}}\big((E\times F)^n)$, there exists $\bar a\in \tilde{\mathfrak{C}}_{\textrm{cl}}(\P_{(X_0^a,Y_0^a)})$ such that $\mu$ is the law of $(X^a_{0:n},Y^a_{0:n})$. Therefore, we get
\beqs
    \Big(\int_{(\R^{2d})^n}a_n(y_{0:n})d\mu(x_{0:n},y_{0:n})\Big)^\top    \mathbf{N}_n\int_{(\R^{2d})^n}a_n(y_{0:n})d\mu(x_{0:n},y_{0:n}) & & \\
     + 2{}_{n}\bar{\mu}^\top 
    \mathbf{S}_n  \int_{(\R^{2d})^n} a_n(y_{0:n})d\mu(x_{0:n},y_{0:n}) & &  \\
      + 2\int_{(\R^{2d})^n} x_n^{\top} {B}_n^\top\mathbf{J}_{n+1}^\top\Lambda_{n+1}\mathbf{J}_{n+1} D_na_n(y_{0:n})d\mu(x_{0:n},y_{0:n}) & & \\
  + \int_{(\R^{2d})^n} a_n(y_{0:n})^\top \big({D}_n^\top\mathbf{J}_{n+1}^\top\Lambda_{n+1}\mathbf{J}_{n+1} D_n+ R_n\big)a_n(y_{0:n})d\mu(x_{0:n},y_{0:n}) & = & \\
  \E\Big[a_n(Y_{0:n}^{\bar a})\Big] ^\top    \mathbf{N}_n \E\Big[a_n(Y_{0:n}^{\bar a})\Big] & & \\
   + 2{}_{n}\bar{\mu}^\top 
    \mathbf{S}_n \E\Big[a_n(Y_{0:n}^{\bar a})\Big] 
    & &  \\
      + 2  \E\Big[ (X_n^{\bar a})^{\top} {B}_n^\top\mathbf{J}_{n+1}^\top\Lambda_{n+1}\mathbf{J}_{n+1} D_na_n(Y_{0:n}^{\bar a})\Big] & & \\
       + \E\Big[ a_n(Y_{0:n}^{\bar a})^\top \big({D}_n^\top\mathbf{J}_{n+1}^\top\Lambda_{n+1}\mathbf{J}_{n+1} D_n+ R_n\big)a_n(Y_{0:n}^{\bar a})\Big] &  &
\enqs
for any $a_n\in \tilde{\mathfrak{C}}_n(\mu)$. 
Since $ \Lambda_{n+1}$ and $R_n$ are positive and from the definition of $\mathbf{S}_n$, we can apply Jensen conditional inequality given $Y_n$ and we get 
\beqs
 \E\Big[a_n(Y_{0:n}^{\bar a})\Big] ^\top    \mathbf{N}_n \E\Big[a_n(Y_{0:n}^{\bar a})\Big] 
   +  2{}_{n}\bar{\mu}^\top 
    \mathbf{S}_n \E\Big[a_n(Y_{0:n}^{\bar a})\Big] 
    & &  \\
      + 2  \E\Big[ (X_n^{\bar a})^{\top} {B}_n^\top\tilde \Lambda_{n+1} D_na_n(Y_{0:n}^{\bar a})\Big] & & \\
       + \E\Big[ a_n(Y_{0:n}^{\bar a})^\top \big({D}_n^\top\mathbf{J}_{n+1}^\top\Lambda_{n+1}\mathbf{J}_{n+1} D_n+ R_n\big)a_n(Y_{0:n}^{\bar a})\Big] & \geq &\\
        \E\Big[\hat a_n(Y_{n}^{\bar a})\Big] ^\top    \mathbf{N}_n \E\Big[\hat a_n(Y_{n}^{\bar a})\Big] 
   +  2{}_{n}\bar{\mu}^\top 
    \mathbf{S}_n \E\Big[\hat a_n(Y_{n}^{\bar a})\Big] 
    & &  \\
      + 2  \E\Big[ (X_n^{\bar a})^{\top} {B}_n^\top\mathbf{J}_{n+1}^\top\Lambda_{n+1}\mathbf{J}_{n+1} D_n\hat a _n(Y_{n}^{\bar a})\Big] & & \\
       + \E\Big[ \hat a_n(Y_{n}^{\bar a})^\top \big({D}_n^\top\mathbf{J}_{n+1}^\top\Lambda_{n+1}\mathbf{J}_{n+1} D_n+ R_n\big)\hat a_n(Y_{n}^{\bar a})\Big] &  &
\enqs
where $\hat a_n$ is defined by
\beqs
 \hat a_n(y) & = & \E\Big[ a_n(Y_{0:n}^{\bar a})\Big|Y_n=y\Big]\;,\quad y\in\R^d\;.
\enqs
Therefore, the infimum in the definition of $W_n$ can be restricted to  $\tilde{\mathfrak{C}}_{n,\textrm{cl}}(\mu)$: 
\beqs
W_n(\mu) & = &  \inf_{a_n\in\tilde{\mathfrak{C}}_{n,\textrm{cl}}(\mu)} 
  \Big\{\langle{}_{n}\mu\rangle(\mathbf{Q}_n+\mathbf{B}_n^\top\mathbf{J}_{n+1}^\top\Lambda_{n+1}\mathbf{J}_{n+1}\mathbf{B}_n)  +({}_{n}\bar{\mu})^\top \mathbf{K}_n
  ({}_{n}\bar{\mu})\\
   & & \qquad \qquad+ \textrm{Tr}(\mathbf{J}_{n+1}^\top\Lambda_{n+1}\mathbf{J}_{n+1})+\textrm{Tr}(\mathbf{I}^\top \Lambda_{n+1}\mathbf{I})+\chi_{n+1}\\
   & &  \qquad \qquad+ \Big(\int_{\R^{2d}}a_n(y_{n})d{}_{n}\mu(x_{n},y_{n})\Big)^\top 
   \mathbf{N}_n\int_{\R^{2d}}a_n(y_{n})d{}_n\mu(x_{n},y_{n})\\
    & & \qquad \qquad+ 2{}_{n}\bar{\mu}^\top 
    \mathbf{S}_n
    \int_{\R^{2d}} a_n(y_{n})d{}_n\mu(x_{n},y_{n})  \\
     & & \qquad \qquad+ 2\int_{\R^{2d}} x_n^{\top} {B}_n^\top\mathbf{J}_{n+1}^\top\Lambda_{n+1}\mathbf{J}_{n+1} D_na_n(y_{n})d{}_{n}\mu(x_{n},y_{n}) \\
 & & \qquad \qquad+ \int_{\R^{2d}} a_n(y_{n})^\top \big({D}_n^\top\mathbf{J}_{n+1}^\top\Lambda_{n+1}\mathbf{J}_{n+1} D_n+ R_n\big)a_n(y_{n})d{}_n\mu(x_{n},y_{n})\Big\}\;.
 \enqs
From this last identity we deduce that $W_n$ depends only on ${}_n\mu$.
Using \eqref{dynYLQ}, we have
\beqs
W_n(\mu)  & = &  \inf_{a_n\in\tilde{\mathfrak{C}}_{n,\textrm{cl}}(\mu)} 
  \Big\{\langle{}_{n}\mu\rangle(\mathbf{Q}_n+\mathbf{B}_n^\top\Lambda_{n+1}\mathbf{B}_n)  + \textrm{Tr}(\mathbf{I}^\top \Lambda_{n+1}\mathbf{I})+\chi_{n+1}+({}_{n}\bar{\mu})^\top \mathbf{K}_n
  ({}_{n}\bar{\mu})\\
   & &  \qquad \qquad+ \Big(\int_{\R^{d}}a_n(y_{n})d\,{}^2_{n}\mu(y_{n})\Big)^\top 
   \mathbf{N}_n\int_{\R^{d}}a_n(y_{n})d\,{}^2_{n}\mu(y_{n})\\
    & & \qquad \qquad+ 2\,{}_{n}\bar{\mu}^\top 
    \mathbf{S}_n
    \int_{\R^{2d}}a_n(y_{n})d\,{}^2_{n}\mu(y_{n})  \\
     & & \qquad \qquad+ 2\int_{\R^{d}} \Phi_n(\mu,y_n)^{\top} {B}_n^\top\mathbf{J}_{n+1}^\top\Lambda_{n+1}\mathbf{J}_{n+1} D_n a_n(y_{n})d\;{}^2_n\mu(y_{n}) \\
 & & \qquad \qquad+ \int_{\R^{d}} a_n(y_n)^\top \big({D}_n^\top\mathbf{J}_{n+1}^\top\Lambda_{n+1}\mathbf{J}_{n+1} D_n+ R_n\big)a_n(y_n)d\;{}^2_n\mu(y_{n})\Big\}\;.
\enqs
where $\Phi_n$  is  given by 
\eqref{defPhi}.
We then notice that the function of $a_n$ inside the infimum is continuous and goes to $+\infty$ as $\int |a_n |^2d\;{}^2_n\mu$ goes to infinity since $\Lambda_{n+1}+\Theta_{n+1}$ is nonnegative and $R_n$ is positive. Hence, this function admits a global minimum. 
Since this function is continuously differentiable we can compute the first order condition and we get 

\beq\label{FOC}
 \mathbf{N}_n\Big(\int_{\R^{2d}}a_n(y'_{n})d\,{}^2_{n}\mu(y'_{n})\Big) 
   +   \mathbf{S}_n^\top 
    {}_{n}\bar{\mu} +D_n^{\top} \mathbf{J}_{n+1}^\top\Lambda_{n+1}\mathbf{J}_{n+1}{B}_n \Phi(\mu,y_n)& & \\+ \big({D}_n^\top\mathbf{J}_{n+1}^\top\Lambda_{n+1}\mathbf{J}_{n+1} D_n+ R_n\big)a_n(y_{n}) & = & 0\;,\quad y_n\in \R^d\;.\nonumber
\enq
%
Therefore we get
\beqs
a^*_n(y_{n}) & = & -\big({D}_n^\top\mathbf{J}_{n+1}^\top\Lambda_{n+1}\mathbf{J}_{n+1} D_n+ R_n\big)^{-1}\big({D}_n^\top\mathbf{J}_{n+1}^\top\Lambda_{n+1}\mathbf{J}_{n+1}  {B}_n\Phi(\mu,y_n)\\
 & & \quad\qquad\qquad\quad\qquad\quad\qquad\qquad\qquad + \mathbf{S}_n^\top 
    {}_{n}\bar{\mu}+ \mathbf{N}_n \int_{\R^{d}}a_n(y_{n})d({}^2_n\mu)(y_{n})\big)\;.
\enqs
Taking the integral with respect to $({}^2_n\mu)$ on both sides of  \eqref{FOC}, we get
\beqs
\int_{\R^{d}}a_n(y_{n})d({}^2_n\mu)(y_{n}) & =& -\big({D}_n^\top\mathbf{J}_{n+1}^\top(\Lambda_{n+1}+ \Theta_{n+1})\mathbf{J}_{n+1} D_n+ R_n\big)^{-1}  
   \tilde{\mathbf{S}}_n^\top 
    {}_{n}\bar{\mu}
\enqs
with
\beqs
 \tilde{\mathbf{S}}_n & = &  {\mathbf{S}}_n+\mathbf{B}_n^\top\mathbf{J}_{n+1}^\top\Lambda_{n+1}\mathbf{J}_{n+1}{D}_n
\enqs
and
\beqs
a^*_n(y_{n}) & = & G_n\Big(\Xi_n \Phi(\mu,y_n)+ \hat{\mathbf{S}}_n^\top 
    {}_{n}\bar{\mu}
 \Big)\;. 
\enqs
with
\beqs
G_n & = & -\big({D}_n^\top\mathbf{J}_{n+1}^\top\Lambda_{n+1}\mathbf{J}_{n+1} D_n+ R_n\big)^{-1}\\
\Xi_n & = & {D}_n^\top\mathbf{J}_{n+1}^\top\Lambda_{n+1}\mathbf{J}_{n+1}  {B}_nJ_{n+1}^{-1}\\
\hat{\mathbf{S}}_n & = & {\mathbf{S}_n}-\tilde{\mathbf{S}}_n \big({D}_n^\top\mathbf{J}_{n+1}^\top(\Lambda_{n+1}+ \Theta_{n+1})\mathbf{J}_{n+1} D_n+ R_n\big)^{-1}\mathbf{N}_n\;.
\enqs

We then get
\beqs
W_n(\mu) & = &   \langle{}_{n}\mu\rangle(\mathbf{Q}_n+\mathbf{B}_n^\top\Lambda_{n+1}\mathbf{B}_n) + \textrm{Tr}(\mathbf{J}_{n+1}^\top\Lambda_{n+1}\mathbf{J}_{n+1})+\textrm{Tr}(\mathbf{I}^\top \Lambda_{n+1}\mathbf{I})\\
 & & +\chi_{n+1}+({}_{n}\bar{\mu})^\top \mathbf{K}_n
  ({}_{n}\bar{\mu})\\
   & &   + \Big(\int_{(\R^{2d})}a^*_n(y_{n})d{}_{n}\mu(x_{n},y_{n})\Big)^\top 
   \mathbf{N}_n\int_{(\R^{2d})}a^*_n(y_{n})d\;{}_n\mu(x_n,y_{n})\\
    & &  + 2{}_{n}\bar{\mu}^\top 
    \mathbf{S}_n
    \int_{\R^{2d}} a^*_n(y_{n})d\;{}_n\mu(x_n,y_{n})  \\
     & &  + 2\int_{(\R^{2d})} x_n^{\top} {B}_n^\top\mathbf{J}_{n+1}^\top\Lambda_{n+1}\mathbf{J}_{n+1} D_na^*_n(y_{n})d\;{}_n\mu(x_n,y_{n}) \\
 & &  + \int_{\R^{2d}} a^*_n(y_{n})^\top \big({D}_n^\top\mathbf{J}_{n+1}^\top\Lambda_{n+1}\mathbf{J}_{n+1} D_n+ R_n\big)a^*_n(y_{n})d\;{}_n\mu(x_n,y_{n})\\
 & = & \langle{}_{n}\mu\rangle( \Lambda_n ) +  {}_n\bar \mu^\top \Theta_n\; {}_n\bar \mu +  \chi_n
\enqs
with
\beqs
\Lambda_n & = &\mathbf{Q}_n+\mathbf{B}_n^\top\Lambda_{n+1}\mathbf{B}_n +\mathbf{I}\Xi_n^\top({D}_n^\top\mathbf{J}_{n+1}^\top\Lambda_{n+1}\mathbf{J}_{n+1} D_n+ R_n)\Xi_n\mathbf{I}^\top\;,\\
\Theta_n & = & \mathbf{K}_n\\
 & & +\hat{\mathbf{S}}_n\big({D}_n^\top\mathbf{J}_{n+1}^\top\Lambda_{n+1}\mathbf{J}_{n+1} D_n+ R_n\big)^{-1}\mathbf{N}_n\big({D}_n^\top\mathbf{J}_{n+1}^\top\Lambda_{n+1}\mathbf{J}_{n+1} D_n+ R_n\big)^{-1}  \hat{\mathbf{S}}_n^\top
   \\
 & & +\mathbf{I}\Xi_n^\top\big({D}_n^\top\mathbf{J}_{n+1}^\top\Lambda_{n+1}\mathbf{J}_{n+1} D_n+ R_n\big)^{-1}\mathbf{N}_n\big({D}_n^\top\mathbf{J}_{n+1}^\top\Lambda_{n+1}\mathbf{J}_{n+1} D_n+ R_n\big)^{-1}  \Xi_n\mathbf{I}^\top
  \\
 & & +\mathbf{I}\Xi_n^\top\big({D}_n^\top\mathbf{J}_{n+1}^\top\Lambda_{n+1}\mathbf{J}_{n+1} D_n+ R_n\big)^{-1}\mathbf{N}_n\big({D}_n^\top\mathbf{J}_{n+1}^\top\Lambda_{n+1}\mathbf{J}_{n+1} D_n+ R_n\big)^{-1}  \hat{\mathbf{S}}_n^\top 
   \\
   & & +\hat{\mathbf{S}}_n\big({D}_n^\top\mathbf{J}_{n+1}^\top\Lambda_{n+1}\mathbf{J}_{n+1} D_n+ R_n\big)^{-1}\mathbf{N}_n\big({D}_n^\top\mathbf{J}_{n+1}^\top\Lambda_{n+1}\mathbf{J}_{n+1} D_n+ R_n\big)^{-1}   \Xi_n\mathbf{I}^\top\\
 & &   +2{\mathbf{S}}_n\big({D}_n^\top\mathbf{J}_{n+1}^\top\Lambda_{n+1}\mathbf{J}_{n+1} D_n+ R_n\big)^{-1} \big( \Xi_n\mathbf{I}^\top+\hat{\mathbf{S}}_n^\top\big) \\
 & & +2\mathbf{I}\big({B}_nJ_{n+1}^{-1}\big)^\top\mathbf{J}_{n+1}^\top\Lambda_{n+1}\mathbf{J}_{n+1} D_n\big({D}_n^\top\mathbf{J}_{n+1}^\top\Lambda_{n+1}\mathbf{J}_{n+1} D_n+ R_n\big)^{-1}\hat{\mathbf{S}}_n^\top\\
& & +\hat{\mathbf{S}}_n\big({D}_n^\top\mathbf{J}_{n+1}^\top\Lambda_{n+1}\mathbf{J}_{n+1} D_n+ R_n\big)^{-1}\hat{\mathbf{S}}_n^\top\\
 & & +\mathbf{I}\Xi_n^\top\big({D}_n^\top\mathbf{J}_{n+1}^\top\Lambda_{n+1}\mathbf{J}_{n+1} D_n+ R_n\big)^{-1}\hat{\mathbf{S}}_n^\top
   \\ & &
  +\hat{\mathbf{S}}_n\big({D}_n^\top\mathbf{J}_{n+1}^\top\Lambda_{n+1}\mathbf{J}_{n+1} D_n+ R_n\big)^{-1}\Xi_n\mathbf{I}^\top\;, \\
\chi_n & = & \textrm{Tr}(\mathbf{J}_{n+1}^\top\Lambda_{n+1}\mathbf{J}_{n+1})+\textrm{Tr}(\mathbf{I}^\top \Lambda_{n+1}\mathbf{I})+\chi_{n+1}\;.
\enqs
We then easily have $\Lambda_n$ nonnegative.
A straghtforward computation shows that $\Theta_n+\Lambda_n$ is also nonnegative. 
Therefore, the induction property holds true at rank $n$, and it holds for any $n=0,\ldots,T$. \end{proof}

\subsection{Numerical approximation of the optimal value}
\subsubsection{The algorithm} \label{quantize_algo}
We present a numerical algorithm for the approximation of the optimal value base on the verification Theorem \ref{VerifTHM}.  For that, we define two finite subsets $\Lambda_E$ and $\Lambda_F$ of $E$ and $F$ respectively by
\beqs
\Lambda_E & = & \big\{x^1,\ldots,x^N \big\}\;,\\
\Lambda_F & = & \big\{y^1,\ldots,y^N \big\}\;.
\enqs
We next introduce two Voron\"i tessellations   $(C(\Lambda_E)^i)_{1\leq i\leq N}$ and  $(C(\Lambda_F)^i)_{1\leq i\leq N}$ of subsets of $E$ and $F$ respectively. This means that  $(C(\Lambda_E)^i)_{1\leq i\leq N}$ and  $(C(\Lambda_F)^i)_{1\leq i\leq N}$ satisfy
\beqs
\bigcup_{1\leq i\leq N}C(\Lambda_E)^i ~ = ~ E \;,& & C(\Lambda_E)^i\cap C(\Lambda_E)^j ~=~\emptyset \mbox{ for } i\neq j\;,\\
\bigcup_{1\leq i\leq N}C(\Lambda_F)^i ~= ~ F\;,& & C(\Lambda_F)^i\cap C(\Lambda_F)^j ~=~\emptyset \mbox{ for } i\neq j\;,
\enqs
and
\beqs
C(\Lambda_E)^i & \subset & \big\{x\in E~:~|x-x^i|=\min_{1\leq j\leq N}|x-x^j|\big\}\;,\\
C(\Lambda_F)^i & \subset & \big\{y\in F~:~|y-y^i|=\min_{1\leq j\leq N}|x-x^j|\big\}\;,\\
\enqs
for ${i=1,\ldots, N}$. We then define the projection operators $\textrm{Proj}_{\Lambda_E}$ and $\textrm{Proj}_{\Lambda_F}$ by 
\beqs
\textrm{Proj}_{\Lambda_E}(x) & = & x^i
\enqs
for $x\in C^i(\Lambda_E)$ and 
\beqs
\textrm{Proj}_{\Lambda_F}(y) & = & y^i
\enqs
for $y\in C^i(\Lambda_F)$ and $1\leq i\leq N$. Our goal is to provide a discrete version of the dynamic programming equation defining the functions $W_0,\ldots,W_T$ in Theorem \ref{VerifTHM}. 
We first discretize the initial conditions $\xi$ and $\zeta$ by defining
\beqs
\hat \xi & = & \textrm{Proj}_{\Lambda_E}(\xi)\;,\\
\hat \zeta & = & \textrm{Proj}_{\Lambda_F}(\zeta)\;.
\enqs
We then define the processes $(\hat X^a,\hat Y^a)$ as the quantizer $(X^a,Y^a)$ according to $\Lambda_E$ and $\Lambda_F$. This means that $(\hat X^a,\hat Y^a)$ is the approximation of $(X^a,Y^a)$ valued in $\Lambda_E\times\Lambda_F$ by $(\hat X^a_0,\hat Y^a_0)=(\hat \xi,\hat \zeta)$ and 
\beqs
\hat X ^a_{n+1} & = & \textrm{Proj}_{\Lambda_E}\Big(G_{n+1}\big( \hat X^a_n,\P_{\hat X^a_n},a(\hat Y^a_{0:n}),\eps_{n+1} \big)\Big)\;, \\
\hat Y ^a_{n+1} & = & \textrm{Proj}_{\Lambda_F}\Big(H_{n+1}\big( \hat X^a_n,\hat Y^a_n,a(\hat Y^a_{0:n}),\eta_{n+1} \big)\Big) 
\enqs
for $n=0,\ldots,T-1$. A straightforward computation give the dynamics of the process $(\hat X^a,\hat Y^a)$ as
\beqs
\P\big( (\hat X^a_{n+1},\hat Y^a_{n+1})=( x^i,  y^j)\big| \hat X^a_{0:n},\hat Y^a_{0:n} \big) & = & \hat h_{n+1}\big( x^i,\hat Y^a_{n},a_n(\hat Y^a_{0:n}), y^j \big)\hat P_{n+1}^{a_n(\hat Y^a_{0:n})}\big(\hat X ^a_n,\P_{\hat X ^a_n},\hat x^j\big)
\enqs
 for $n=0,\ldots,T-1$ and $( x^i, y^j)\in\Lambda_E\times\Lambda_F$, where
\beqs
\hat h_{n+1}\big( x^i,\hat Y^a_{n},a_n(\hat Y^a_{0:n}), y^j \big) & = & \int_{C_j(\Lambda_F)}h_{n+1}\big( x^i,\hat Y^a_{n},a_n(\hat Y^a_{0:n}),y \big)dy
\enqs
and
\beqs
\hat P_{n+1}^{a_n(\hat Y^a_{0:n})}\big(\hat X ^a_n,\P_{\hat X ^a_n}, x^j\big) 
 &  = & \int_{C_i(\Lambda_E)} P_{n+1}^{a_n(\hat Y^a_{0:n})}\big(\hat X ^a_n,\P_{\hat X ^a_n},dx\big)\;.
\enqs
Then, the global controlled transition $\bar P$ is replaced by $\hat{\bar{P}}$  defined by
\beqs
\hat{\bar{P}}_{n+1}^{a_n}( z, M, w) & = & \hat P _{n+1}^{a_n(y_{0:n})}( x_n,{}^1_{n} M,w_{1})\hat h_{n+1}(w_1,y_{n}, a_n(y_{0:n}), w_2)
\enqs
for $n=0,\ldots,T-1$, $z=(x_{\ell},y_{\ell})_{0\leq\ell\leq n}\in (\Lambda_E\times \Lambda_F)^{n+1}$, $M\in \Pc_2\big((\Lambda_E\times \Lambda_F\big)^{n+1}\big)$, $a_{n}\in \tilde{\mathfrak{C}}_{n}(M)$ and $w=(w_1,w_2)\in \Lambda_E\times \Lambda_F$. For $n=0,\ldots,T-1$ and $\mu\in \Pc_2((\Lambda_E\times \Lambda_F)^{n+1})$  $a_n\in \tilde{\mathfrak{C}}_n(\mu)$, we define  the measure $\bar P^{a}_{n+1}\mu \in \Pc_2((\Lambda_E\times \Lambda_F)^{n+2})$ by
\beqs
\hat{ \bar P}^{a_n}_{n+1}\mu (x_{0:n+1},y_{0:n+1}) & = & \hat{\bar P}^{a_n}_{n+1}(x_{0:n},y_{0:n}, \mu,x_{n+1},y_{n+1})\mu(x_{0:n},y_{0:n})
\enqs
for $(x_{0:n+1},y_{0:n+1})\in (\Lambda_E\times \Lambda_F)^{n+2}$. The dynamics of the controlled measures $\P_{\hat X_n^a,\hat Y_n^a}$ can be written under the following simplified form
\beqs
\P_{\hat X_{0:n+1}^a,\hat Y_{0:n+1}^a}(x_{0:n+1},y_{0:n+1}) & = &\hat{\bar P}^{a_n}_{n+1}\P_{\hat X_{0:n}^a,\hat Y_{0:n}^a}(x_{0:n},y_{0:n})\;.
\enqs
We then define the related discretized cost coefficients $\hat C_n$ for 
$n=0,\ldots,T-1$ and $\gamma$ by
\beqs
\hat C_n(M,a_n) & = & \sum_{(x_{0:n},y_{0:n})\in\Lambda_E\times\Lambda_F} c_n\Big(x_n,{}^1_nM,a_n(y_{0:n})\Big)M(x_{0:n},y_{0:n})
\enqs
for $M\in\Pc_2\big((\Lambda_E\times \Lambda_F)^{n+1}\big)$, $a_n\in\tilde{\mathfrak{C}}_n(M)$
and 
\beqs
\hat \Gamma (M) &  = & \sum_{(x_{0:T},y_{0:T})\in\Lambda_E\times\Lambda_F} \gamma\Big(x_T,{}^1_TM\Big)dM_T(x_{0:T},y_{0:T})\;,
\enqs
for  $M\in\Pc_2\big((\Lambda_E\times \Lambda_F)^{T+1}\big)$. Then, the related approximated value functions $\hat W_n$, $n=0,\ldots,T$ are given by
\beqs
\hat W_T(\mu) & = & \hat \Gamma (\mu)\;,\quad \mu\in \Pc_{2}((\Lambda_E\times \Lambda_F)^{T+1})\;,
\enqs
and 
\beqs
\hat W_n(\mu) & = & \inf_{a_n\in\tilde{\mathfrak{C}}_{n}(\mu)} \big\{\hat C_n\big(\mu,a_n\big)+\hat W_{n+1}( \hat{\bar P}^{a_n}_{n+1}\mu)\big\}\;,\quad \mu\in \Pc_{2}((\Lambda_E\times \Lambda_F)^{n+1})\;,
\enqs
 for $ n=0,\ldots, T-1$.
To get tractable versions  $\tilde W_n$ of values functions $\hat W_n$, $n=0,\ldots,T$, we define finite subsets $\Pi_{n+1}$ of $\Pc_2((\Lambda_E\times\Lambda_F)^{n+1})$ as follows. We first introduce $L$ sequences $(p_{i,j}^\ell)_{1\leq i,j\leq N}$ for $\ell=1,\ldots,L$ such that $p_{i,j}^\ell\in\R_+$ for $i,j=1,\ldots,N$ and
\beqs
\sum_{1\leq i,j\leq N}p_{i,j}^\ell & =& 1
\enqs
for $\ell=1,\ldots,L$. 
We next define the set $\Pi_n$ by
\beqs
\Pi_n & =  & \Big\{ \sum_{1\leq i_1,j_1,\ldots,i_n,j_n\leq N}\delta_{(x^{i_0},\ldots,x^{i_n},y^{i_0},\ldots,y^{i_n})} \prod_{r=0}^np^{\ell_r}_{i_r,j_r}, ~\ell_1, \ldots, \ell_n \in \{1, \ldots, L\}\Big\}\\
 & \subset & \Pc_2\big( (\Lambda_E\times\Lambda_F)^n \big)
\enqs
 for $n=1,\ldots,T$. We next define the approximation  $\tilde{\bar P}$ of $\hat{\bar{P}}$  defined by
\beqs
\tilde{\bar{P}}_{n+1}^{a_n}( z, M, w) & = & \textrm{Proj}_{\{ p^1_{i,j}\ldots,p^L_{i,j} \}}\Big(\hat{\bar{P}}_{n+1}^{a_n}( z, M, w)\Big)
\enqs
for $n=0,\ldots,T-1$, $z=(x_{\ell},y_{\ell})_{0\leq\ell\leq n}\in (\Lambda_E\times \Lambda_F)^{n+1}$ with $(x_n,y_n)=(x^i,y^j)$,   $M\in \Pc_2\big((\Lambda_E\times \Lambda_F\big)^{n+1}\big)$, $a_{n}\in \tilde{\mathfrak{C}}_{n}(M)$ and $w=(w_1,w_2)\in \Lambda_E\times \Lambda_F$. We observe that 
\beqs
\tilde{\bar{P}}_{n+1}^{a_n}\mu & \in & \Pi_{n+2}
\enqs
for $n=0,\ldots,T-1$, $\mu\in \Pi^{n+1}$ and $a_{n}\in \tilde{\mathfrak{C}}_{n}(\mu)$. In particular, the functions $\tilde W_n$, $n=0,\ldots,T$ are given by
\beqs
\tilde W_T(\mu) & = & \hat \Gamma (\mu)\;,\quad \mu\in \Pi_{T+1}\;,
\enqs
and 
\beqs
\tilde W_n(\mu) & = & \inf_{a_n\in\tilde{\mathfrak{C}}_{n}(\mu)} \big\{\hat C_n\big(\mu,a_n\big)+\tilde W_{n+1}( \tilde{\bar P}^{a_n}_{n+1}\mu)\big\}\;,\quad \mu\in \Pi_{n+1}\;,
\enqs
 for $ n=0,\ldots, T-1$, provide a computable approximation of the functions $W_n$, $n=0,\ldots,T$.

\subsubsection{Test of the algorithm}

It is possible to compare this approximated algorithm with the linear quadratic case developed earlier.
We choose to fix $\Lambda_E$ and $\Lambda_F$ to be the centers of the Vorono\"i cells $C(\Lambda_E)^i$ and $C(\Lambda_F)^i$ for ${1\leq i\leq N}$ respectively.  
Precisely, we apply Lloyd Algorithm on $\mathcal{N}(0, I_d)$ to obtain $C(\Lambda_E)^i$ and  $C(\Lambda_F)^i$.   Thus, $\Lambda_E = \Lambda_F$ and $C(\Lambda_E)^i = C(\Lambda_F)^i$. We refer to the book  \cite{pages2018numerical} for a description of quantization methods and their related algorithms.

We fix $d=2$. In order to be complete, we describe all the matrices we used for this experiment.

For the main dynamics we fix:
\beqs 
B_k  &=& \begin{pmatrix}
0 & 0 \\
0 & 0
\end{pmatrix},\\ 
\bar{B}_k  &=& \begin{pmatrix}
0 & 0 \\
0 & 0
\end{pmatrix}, \\
D_k &=& \begin{pmatrix}
1 & 1 \\
0 & 1
\end{pmatrix}, \\
J_{k+1} &=& \begin{pmatrix}
1 & 1 \\
0 & 1
\end{pmatrix}
\enqs for ${k=1,\ldots, T}$.
For the cost function $\bar{J}$, we fix:
\beqs 
Q_k &=& \begin{pmatrix}
1 & 1 \\
1 & 1
\end{pmatrix}, \\
\bar{Q}_k &=& \begin{pmatrix}
1 & 1 \\
1 & 1
\end{pmatrix}, \\
R_k &=& I_d
\enqs for  ${k=1,\ldots , T}$.
We approximate the integral needed to compute $\hat h_{n+1}$ and $\hat P_{n+1}^{a_n(\hat Y^a_{0:n})}$ through Monte Carlo simulations. Controls $\alpha$ are restricted to the following set
\begin{equation*}
\tilde{\mathfrak{C}}_{n}(M):= \left\{\begin{pmatrix}
-2 \\
-2
\end{pmatrix}, \begin{pmatrix}
-1 \\
-1
\end{pmatrix}, \begin{pmatrix}
1 \\
1
\end{pmatrix}, \begin{pmatrix}
2 \\
2
\end{pmatrix}\right\}.
\end{equation*}
Finally, we fix $T=3$ and we compute $W_t$ at time $t = 0$ using the formula derived in the proof of Proposition \ref{kalman_bucy_proposition} and $\tilde{W}_0$ using the algorithm described in Section \ref{quantize_algo}. The relative error $\frac{|\tilde{W}_0 - W_0|}{W_0}$ is computed for $N=2,4,10$ and $20$. The results are presented in the Figure \ref{fig:quantiz_results_mckean}.

\begin{center}
	\begin{figure}[H]
		\centering
		\includegraphics[scale = 0.62]{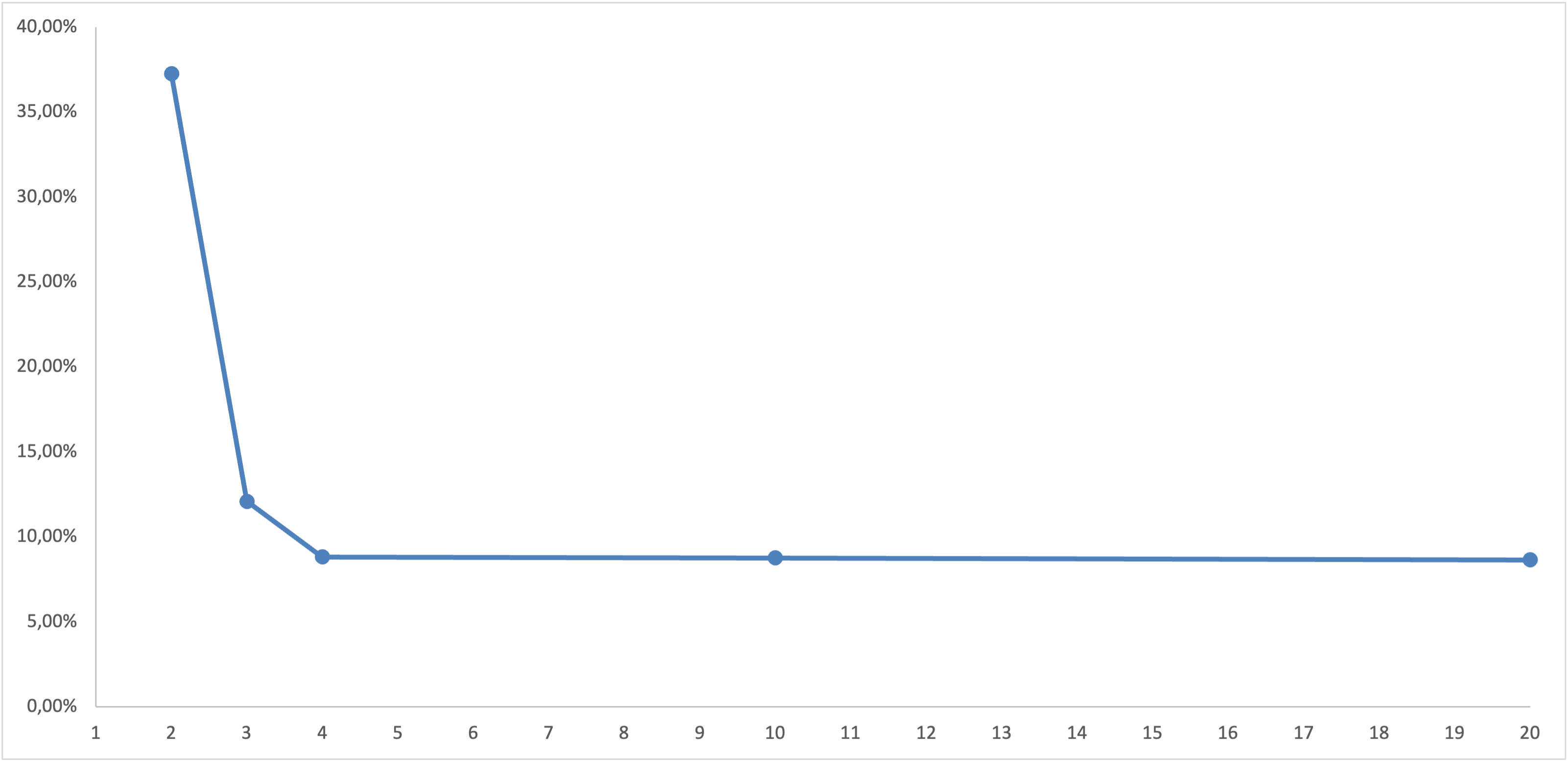}
		\caption{Relative Error of Value function as $N$ grows}
		\label{fig:quantiz_results_mckean}
	\end{figure}
\end{center}

Errors are expressed as percentages. Increasing $N$ decreases the relative error. For small $N$, we achieve good results. For example, with $N\geq4$, the absolute error is less than 10\% (round 8\%). However, the error does not decrease anymore for $N\geq 4$. This might be explained by the restriction done on the control set.
 
\subsection{A Mean-Variance optimal investment problem}
We refer to \cite{pham2011portfolio} for a review on portfolio optimization in partial observation framework. 
 We consider a financial market over the horizon $\{0,\ldots,T\}$. We suppose that this market is composed by one asset with return process $(R_0,\ldots,R_T)$  satisfying
\beqs 
R_k =  b_0\Delta + \sigma\sqrt{\Delta}\epsilon_{k+1}\;,\quad k=0,\ldots,T-1\;,
\enqs 
where the drift and the variance $b_0$ and $\sigma$ are known. We assume that $(\epsilon_1,\ldots,\epsilon_T)$ is a sequence of independent $\Nc(0,1)$-distributed random variables independent from $X_0$. We consider  an investor who can invest at each time on this asset. The wealth process  $(X_0^\alpha, \ldots, X_T^\alpha)$ is given by
\beqs 
X_{k+1}^\alpha = X_k^\alpha  + \alpha_k \bigg( b_0\Delta + \sigma\sqrt{\Delta}\epsilon_{k+1}\bigg) \;,\quad k=0,\ldots,T-1\;,
\enqs 
Due to a lack of liquidity on this asset, we suppose that the investor does not observe its portfolio directly, but rather an approximate representation $(X_0^\alpha, \ldots, X_T^\alpha)$ is given by
\beqs
Y_{k}^\alpha = X_{k}^\alpha + \eta_{k}\;,\quad k=1,\ldots,T\;.
\enqs
Such a situation can be faced by private equity investors since they only have some intuition about the value of their portfolios. We refer to \cite{gompers1997risk}, \cite{gourier2022capital}, \cite{tegondap2018} for more details. The studied system is then:
\beq \label{markowitz_mckean}
\left\{ 
    \begin{array}{ll}
       X_{k+1}^\alpha = X_k^\alpha  + \alpha_k \bigg( b_0\Delta + \sigma\sqrt{\Delta}\epsilon_{k+1}\bigg)\;,  \\
        Y_{k+1}^\alpha = X_{k+1}^\alpha + \eta_{k+1} \;, ~~~~~~ k=0,\ldots,T-1\;.
    \end{array}
\right.
\enq 
We suppose that $\eta_{k+1}$ is a sequence of independent $\Nc(0,1)$-distributed random variables, also independent from $(\eta_1,\ldots,\eta_T)$, $X_0$ and $Y_0$.

The investor goal is to find a portfolio allocation  that minimize a mean-variance criterum:
\beqs
V_0 &=& \inf_{\alpha\in \bar{\mathfrak{C}}}J(\alpha) \\
    &=& \inf_{\alpha\in \bar{\mathfrak{C}}} \Bigg[\frac{\gamma}{2} Var[X_T^\alpha] - \E[X_T^\alpha]\Bigg]
\enqs 
for a $\gamma > 0$.

We propose to apply the numerical approximation algorithm of Section \ref{quantize_algo}. As a starting point, we set the $d=1$, $T=5$. Additionally, we fix $b_0 = 0.02, \sigma = 0.05$ for the Return process, so that the investor can expect a return of $0.02$ with a volatility of $0.05$ for this asset.  Furthermore, we assume the control space to be $\{ 0.5, 0.75, 1, 2\}$. Investing only a quarter, half, three quarters, or the entire wealth of an investor is permitted. Using the proposed algorithm with $N=2$, we compute the optimal control at every time $t=0, \ldots, T$. This control leads to a portfolio. Our proposed allocation is benchmarked against two strategies. The first one is known as 'buy and hold': the investor remains invested in the asset at all times.  For the second strategy, it can be viewed as a classical trending strategy: if the asset return is positive at a given time $t$, the investor invests, else the investor shorts the asset. Based on 250 trajectory simulations, we compute the empirical final wealth mean, denoted by $\bar{\E}[X_T^\alpha]$ and the empirical final wealth variance denoted by $\bar{\text{Var}}[X_T^\alpha]$. We present in Tables \ref{tab:my-table_gamma2}, \ref{tab:my-table_gamma4}, \ref{tab:my-table_gamma8} and \ref{tab:my-table_gamma16} the results for several values of $\gamma$.

\begin{table}[H]

\centering

\resizebox{\textwidth}{!}{%
\begin{tabular}{|l|l|l|l|}
\hline
\textbf{} & \textbf{Proposed Strategy} & \textbf{Buy and Hold} & \textbf{Trending Strategy} \\ \hline
$\bar{\E}[X_T^\alpha]$    & 1,02027868 & \textbf{1,04535767} & 1,01155139 \\ \hline
$\bar{\text{Var}}[X_T^\alpha]$ & \textbf{0,00481573} & 0,00680738 & 0,00688046 \\ \hline
\textbf{$V_0$}       & -1,01546295 & \textbf{-1,03855029}& -1,00467093 \\ \hline
\end{tabular}%
}
\caption{Empirical Results for $\gamma = 2$}
\label{tab:my-table_gamma2}
\end{table}
\begin{table}[H]
\centering

\resizebox{\textwidth}{!}{%
\begin{tabular}{|l|l|l|l|}
\hline
\textbf{} & \textbf{Proposed Strategy} & \textbf{Buy and Hold} & \textbf{Trending Strategy} \\ \hline
$\bar{\E}[X_T^\alpha]$    & 1,02681514 & \textbf{1,03881421} & 1,01034027 \\ \hline
$\bar{\text{Var}}[X_T^\alpha]$ & \textbf{0,00467356} & 0,00653589 & 0,00636125 \\ \hline
\textbf{$V_0$}       & -1,01746802 & \textbf{-1,02574243} & -0,99761777 \\ \hline
\end{tabular}%
}
\caption{Empirical Results for $\gamma = 4$}
\label{tab:my-table_gamma4}
\end{table}

\begin{table}[H]
\centering

\resizebox{\textwidth}{!}{%
\begin{tabular}{|l|l|l|l|}
\hline
\textbf{} & \textbf{Proposed Strategy} & \textbf{Buy and Hold} & \textbf{Trending Strategy} \\ \hline
$\bar{\E}[X_T^\alpha]$    & 1,02314645 & \textbf{1,03975832} & 1,00942357 \\ \hline
$\bar{\text{Var}}[X_T^\alpha]$ & \textbf{0,00452504} & 0,00651461 & 0,00684134\\ \hline
\textbf{$V_0$}       & -1,00504629 & \textbf{-1,01369988} & -0,98205821 \\ \hline
\end{tabular}%
}
\caption{Empirical Results for $\gamma = 8$}
\label{tab:my-table_gamma8}
\end{table}

\begin{table}[H]
\centering

\resizebox{\textwidth}{!}{%
\begin{tabular}{|l|l|l|l|}
\hline
\textbf{} & \textbf{Proposed Strategy} & \textbf{Buy and Hold} & \textbf{Trending Strategy} \\ \hline
$\bar{\E}[X_T^\alpha]$    & 1,01748989 & \textbf{1,03562335} & 1,01643678 \\ \hline
$\bar{\text{Var}}[X_T^\alpha]$ & \textbf{0,00433524} & 0,00693694 & 0,006672365 \\ \hline
\textbf{$V_0$}       & \textbf{-0,98280797} & -0,98012783 & -0,96305786 \\ \hline
\end{tabular}%
}
\caption{Empirical Results for $\gamma = 16$}
\label{tab:my-table_gamma16}
\end{table}

With the proposed approach, the variance of the final wealth is systematically smaller for every $\gamma$. The buy-and-hold strategy provides the best returns for investors: however, the proposed strategy reduces variance compared to a buy-and-hold strategy. For example, when $\gamma = 2$, the proposed strategy reduces volatility by 41\% compared to buy and hold for a return’s cost of 2.45\%.  We note that the proposed strategy is better in terms of $V_0$ value when $\gamma$ is equal to 16. As a result, we have proposed an interesting allocation that can be used to reduce portfolio risk using only two quantization points.

\section{Conclusion}
A partially observed optimal control problem for a system with mean field discrete time dynamics is presented and solved. We extend the linear-quadratic  case (also known as Kalmann Bucy) to deal with the mean field dependence. We also propose a general algorithmic approach based on optimal quantization to  approximate the optimal value. We check the robustness of the algorithm  empirically with a financial example. Some extensions of the  work can be proposed. A first natural question is the estimation of the error of the proposed algorithm. The extension of the results to the continuous time case can also be addressed. This leads to the question of the approximation of the continuous time case by a discrete-time model using an Euler discretization of the continuous problem.

\bibliographystyle{plain}

\bibliography{BayesDTMKVcontrol}
\end{document}